\newcommand{\rem}[1]{}
\theoremstyle{plain}
\newtheorem{lemma}{Lemma}
\newtheorem{theorem}[lemma]{Theorem}
\newtheorem{proposition}[lemma]{Proposition}
\newtheorem{definition}[lemma]{Definition}
\theoremstyle{remark}
\newtheorem{remark}{Remark}
\newcommand{\mf}{\mathfrak}
\newcommand*{\ang}[1]{\left\langle #1 \right\rangle}
\def\aa{\alpha}
\begin{document}
\title[Inertial Manifolds for Turbulence Models] {Inertial Manifolds for Certain Sub-Grid Scale $\alpha$-Models of Turbulence}
\date{September 16, 2014}

\author[M. Abu Hamed]{Mohammad Abu Hamed}
\address[M. Abu Hamed]
{Department of Mathematics, Technion - Israel Institute of Technology, Haifa 32000, Israel.
{\bf ALSO}, Department of Mathematics, The College of Sakhnin - Academic College for Teacher Education, Sakhnin 30810, Israel.
} \email{mohammad@tx.technion.ac.il}

\author[Y. Guo]{Yanqiu Guo}
\address[Y. Guo]
{Department of Computer Science and Applied Mathematics \\  Weizmann Institute of Science\\
Rehovot 76100, Israel.} \email{yanqiu.guo@weizmann.ac.il}

\author[E. S. Titi]{Edriss S. Titi}
\address[E. S. Titi]
{Department of Mathematics, Texas A\&M University, 3368 TAMU,
 College Station, TX 77843-3368, USA.  {\bf ALSO},
  Department of Computer Science and Applied Mathematics, Weizmann Institute
  of Science, Rehovot 76100, Israel.} \email{titi@math.tamu.edu and
  edriss.titi@weizmann.ac.il}

\begin{abstract}
 In this note we prove the existence of an inertial manifold, i.e., a global invariant, exponentially attracting, finite-dimensional smooth manifold, for two different sub-grid scale $\alpha$-models of turbulence: the simplified Bardina model and the modified Leray-$\alpha$ model, in two-dimensional space. That is, we show the existence of an exact rule that parameterizes the dynamics of small spatial scales in terms of the dynamics of the large ones. In particular, this implies that the long-time dynamics of these turbulence models is equivalent to that of a finite-dimensional system of ordinary differential equations.
\end{abstract}

\maketitle

{\bf MSC Classification}: 35Q30, 37L30, 76BO3, 76D03, 76F20,
76F55, 76F65
\\

{\bf Keywords}: inertial manifold, turbulence models, sub-grid scale models, Navier-Stokes equations, modified Leray-$\alpha$ model, simplified Bardina model.

\section{INTRODUCTION}

The fidelity of the Navier-Stokes equation (NSE) is in capturing
the dynamics of turbulent flow. However, their downfall is in
reliable direct numerical simulation of turbulence.
Therefore scientists have developed various approximate models
which are computable and preserve some statistical properties of
the physical phenomenon of turbulence, and of particular interest to us in this paper are certain sub-grid scale $\alpha$-models of turbulence.

In many applications, it is enough to capture the mean features of
the flow, to obtain this we need to average the nonlinear term in
the NSE and this leads to the well-known closure problem. In 1980 Bardina et al. \cite{Bardina} introduced a
particular sub-grid scale model which was later simplified by
Layton and Lewandowski (see \cite{Layton-06}) which takes the form:
\begin{align}\label{bar1}
\begin{cases}
v_t-\nu\Delta v+(\bar{v}\cdot \nabla)\bar{v}+\nabla p =f,\\
 \nabla\cdot v=0,\\
  v= \bar{v}-\alpha^{2}\Delta \bar{v}.
\end{cases}
\end{align}
Here the unknowns are the fluid velocity field $v$, and  the ``filtered'' velocity vector $\bar{v}$, as well as the ``filtered'' pressure scalar $p$. In addition, there are two given parameters: $\nu>0$ is the constant kinematic viscosity, and $\alpha>0$ is the length scale parameter which represents the width of the filter. The vector field $f$ is a given body forcing, assumed to be time independent. For more details about model (\ref{bar1}), see \cite{BIL,bardina,LT1,LT2}.

In 2005 Cheskidov-Holm-Olson-Titi \cite{Leray-alpha} introduced the Leray-$\alpha$ model:
\begin{align}\label{Leray}
\begin{cases}
w_t-\nu\Delta w+(\bar{w}\cdot \nabla)w+\nabla p =f,\\
 \nabla\cdot w=0,\\
  w= \bar{w}-\alpha^{2}\Delta \bar{w}.\\
\end{cases}
\end{align}
Leray (1934 \cite{Leray-1934}) established the well-posedness of the NSE in 2D and 3D, by introducing a modified system similar to (\ref{Leray}), for which it was easier to prove the existence and uniqueness of solutions, and then by passing with the parameter $\alpha \rightarrow 0^+$ he achieved the existence of solutions to the NSE. An upper bound of the dimension of the global attractor and an analysis of the energy spectrum of the solutions of the 3D version of (\ref{Leray}) were established in \cite{Leray-alpha}, which suggested that the Leray-$\alpha$ model has great potential to become a good sub-grid scale large-eddy simulation model of turbulence. See also a computational study of this model in \cite{B,KLT,KLTT}.

Inspired by the remarkable performance of the Leray-$\alpha$ model, Ilyin-Lunasin-Titi (2006 \cite{Lunasin-Titi-06}) proposed a modified-Leray-$\alpha$ model:
\begin{align} \label{bard}
\begin{cases}
u_t-\nu\Delta u+(u\cdot \nabla)\bar{u}+\nabla p =f,\\
\nabla\cdot u=0,\\
u= \bar{u}-\alpha^{2}\Delta \bar{u}.
\end{cases}
\end{align}
It was demonstrated in \cite{Lunasin-Titi-06} that the reduced modified-Leray-$\alpha$ model (\ref{bard}) in infinite channels and pipes is equally impressive as a closure model to Reynolds averaged equations as Leray-$\alpha$ model (\ref{Leray}) and other sub-grid scale $\alpha$-models, e.g. the Navier-Stokes-$\alpha$ (also known as the viscous Camassa-Holm equations \cite{NS-alpha-3,NS-alpha-1,NS-alpha-2,FHTP}) and the Clark-$\alpha$ \cite{CHTi}.

Comparing the three turbulence models (\ref{bar1}), (\ref{Leray}) and (\ref{bard}), we see that in the simplified Bardina model (\ref{bar1}), both arguments of the nonlinearity are regularized, while the Leray-$\alpha$ model (\ref{Leray}) regularizes only the first argument of the nonlinear term, i.e. the transport velocity,  and in the modified Leray-$\alpha$ model (\ref{bard}), solely the second argument of the nonlinearity is smoothed, i.e. the transported velocity is regularized.
For the models (\ref{bar1}), (\ref{Leray}) and (\ref{bard}), the global well-posedness in 3D, the existence of a finite dimensional global attractor, and the analysis of their energy spectra have been established in \cite{Bardina,Leray-alpha,Lunasin-Titi-06, Leray-1934}.

Our interest lies in the large-time behavior of the dynamics generated by turbulence models. In particular, we aim to show existence of inertial manifolds for two different systems in 2D: the simplified Bardina model (\ref{bar1}) and the modified-Leray-$\alpha$ model (\ref{bard}), subject to periodic boundary condition, with basic domain $\Omega=[0,2\pi L]^2$.

Long-time behavior of solutions of a large class of dissipative PDEs possesses a resemblance of the behavior of finite-dimensional systems. The concept of inertial manifold was introduced to capture such phenomenon. Indeed, an inertial manifold of an evolution equation is a finite-dimensional Lipschitz invariant manifold attracting \emph{exponentially} all the trajectories of a dynamical system induced by the underlying evolution equation \cite{Foias-85, fsr}. The precise definition is given in section \ref{mani}.
The existence of an inertial manifold for an infinite-dimensional evolution equation represents the best analytical form of reduction of an infinite system to a finite-dimensional one. This is because an
inertial manifold is finite-dimensional, and the restriction of the evolutionary equation to this manifold reduces to a finite system of ODEs, which called the \emph{inertial form} of the given evolutionary equation. As a result, the dynamical properties of the solution of the evolutionary PDE, which is an infinite-dimensional dynamical system can be analyzed by the study of an inertial form which is a finite-dimensional system.

Inertial manifolds were introduced by Foias, Sell and Temam in \cite{Foias-85, fsr}. The idea was employed to a large class of dissipative equations \cite{fst} (see also \cite{Tbook}). A number of dynamical systems possess inertial manifolds, e.g., certain nonlinear reaction-diffusion equations in 2D \cite{CFNT2,fsr,mora} and in 3D \cite{ms}, the Kuramoto-Sivashinsky equation \cite{Foias-88,Foias-85,fst,Tbook}, Cahn-Hilliard equation \cite{CFNT1}, as well as the von K\'arm\'an plate equations \cite{Lasiecka-02}, just to name a few. It is worth mentioning that an original purpose of developing the theory of inertial manifolds was for treating the NSE. Unfortunately, the problem of existence of inertial manifolds for the 2D NSE is still unsolved and we are unaware of any such result for a system of hydrodynamics which does not involve an artificial hyperviscosity. In particular, the question of existence of an inertial manifold is still open even for the 2D Navier-Stokes-$\aa$ model, Leray-$\aa$ model and Clark-$\aa$ model and others.
Recently, the concept of \emph{determine form} was introduced in \cite{FJKT1,FJKT2}, in which it is shown that the long-time dynamics of such models, in particular that of the 2D NSE, is equivalent to the long-time dynamics of an ODE with continuously Lipschitz vector field in certain infinite-dimensional space of trajectories with finite range (see also \cite{JST} for related results).
In this paper, we succeed to obtain the existence of inertial manifolds for the simplified Bardina model (\ref{bar1}) and the modified Leray-$\alpha$ model (\ref{bard}), since the nonlinear terms in these two systems are milder than that of the NSE and other $\alpha$-models of turbulence.

The paper is organized as follows: section \ref{pre} is devoted to the preliminaries and the functional settings.
In section \ref{sec3} and section \ref{sec4}, we study the simplified Bardina model (\ref{bar1}) and the modified Leray-$\alpha$ model (\ref{bard}), respectively, and prove the existence of absorbing balls in various Hilbert spaces, as well as the existence of an inertial manifold for both models. In the appendix, we give a detailed justification of the strong squeezing property for these two systems.

\bigskip

\section{PRELIMINARIES} \label{pre}
We introduce some preliminary background material, which is standard in the mathematical theory of the NSE.
\begin{enumerate}
\item Let $\mathcal{F}$ be the set of all two-dimensional trigonometric vector-valued polynomials
with periodic domain $\Omega$.  We then set
$$
\mathcal{V}=\left\{\phi \in \mathcal{F}:\nabla\cdot\phi = 0 \
\mbox{and} \int_\Omega \phi(x)\ dx = 0\right\}.
$$ We set $H$ and $V$ to be the closures of $\mathcal{V}$ in $L^2_{per}$
and $H^1_{per}$, respectively.

\item We denote by $P_{\sigma}:L^2_{per} \rightarrow H$ the Helmholtz-Leray orthogonal
projection operator, and by $A=-P_{\sigma}\Delta$ the Stokes
operator with the domain $D(A) = (H^2_{per}(\Omega))^2\cap V$. Since we work
with periodic space, then it is known that
\begin{equation*}
Au = -P_{\sigma}\Delta u = -\Delta u, \hspace{.5cm} \mbox{for all
}u \in D(A).
\end{equation*}
The operator $A^{-1}$ is a self-adjoint positive definite compact
operator from $H$ into $H$ (cf. \cite{CF88,TT84}). We denote by
$0<L^{-2}=\lambda_1 \leq \lambda_2 \leq \dots \dots$ the eigenvalues of $A$, repeated according to their multiplicities.

\item We denote by $|\cdot|$ and
$(\cdot,\cdot)$ the $L^{2}_{per}$ norm and the $L^{2}_{per}$ inner product,
respectively. Moreover, one can show that $V=D(A^{1/2})$. Therefore we denote by
$((\cdot,\cdot))=(A^{1/2}\cdot,A^{1/2}\cdot)$, and by
$||\cdot||=|A^{1/2}\cdot|$ the inner product and the norm on $V$,
respectively. We also observe that, $D(A^{s/2}) =(H^s_{per}(\Omega))^2\cap V$ (cf. \cite{CF88,TT84}).
In addition, we denote by $V'$ the dual space of $V$, and by $D(A)'$ the dual space of $D(A)$.

\item For $r<s$, we recall the following version of Poincar\'{e} inequality
\begin{equation}\label{poin}
\lambda_{1}^{s-r}|A^{r}\phi|\leq |A^{s}\phi|,
\end{equation}
for every $\phi\in D(A^{s})$.

\item For $w_{1},w_{2}\in V$, we define the bilinear
form
\begin{equation*}
B(w_{1},w_{2})=P_{\sigma}((w_{1}\cdot\nabla)w_{2}).
\end{equation*}
The bilinear form $B: V\times V \rightarrow V'$ is continuous, and it satisfies
\begin{equation}\label{bilinear}
\langle B(w_1,w_2),w_3 \rangle_{V'} = - \langle B(w_1,w_3),w_2 \rangle_{V'}.
\end{equation}
In particular, $\langle B(w_1,w_2),w_2 \rangle_{V'}=0$. Moreover, $(B(w,w),Aw)=0$ for every $w\in D(A)$ (this is only
true in the 2D periodic case). See \cite{CF88,Tbook,TT84, TT} for proofs. In addition, we shall use the following estimate on the $L^2-$norm of $B(w_1,w_2)$ in 2D:
\begin{align}  \label{in}
|B(w_1,w_2)|\leq c|w_1|^{\frac{1}{2}} \|w_1\|^{\frac{1}{2}} \|w_2\|^{\frac{1}{2}} |A w_2|^{\frac{1}{2}},
\end{align}
which is due to H\"older's inequality and Ladyzhenskaya's inequality in 2D: $|\phi|_{L^4}\leq c|\phi|^{\frac{1}{2}} \|\phi\|^{\frac{1}{2}}$.
\end{enumerate}

Finally, we quote the following classical result (see, e.g., \cite{Tbook,TT84}):
\begin{lemma}
Let $X \subset H \equiv H' \subset X'$ be Hilbert spaces. If $u\in L^2(0,T;X)$ with $u_t \in L^2(0,T;X')$, then $u$ is almost everywhere equal to an absolutely continuous function from $[0,T]$ into $H$ and the following equality holds in the distribution sense on $(0,T)$:
\begin{align} \label{lem}
\frac{d}{dt} |u|_H^2= 2\langle u_t, u \rangle_{X'}.
\end{align}
\end{lemma}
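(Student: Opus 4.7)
The plan is to prove this classical regularization result by the standard mollification argument. First I would extend $u$ to a function $\tilde u$ defined on all of $\R$ by symmetric reflection across $0$ and $T$ (or by multiplication with a cutoff and extension by zero), preserving the properties $\tilde u \in L^2_{\mathrm{loc}}(\R;X)$ and $\tilde u_t \in L^2_{\mathrm{loc}}(\R;X')$. Then I would mollify in time, setting $u_\veps = \rho_\veps \ast \tilde u$ where $\rho_\veps$ is a standard $C^\infty$ mollifier on $\R$. Since convolution commutes with the distributional derivative, $(u_\veps)_t = \rho_\veps \ast \tilde u_t$, and by standard mollification estimates $u_\veps \to u$ in $L^2(0,T;X)$ and $(u_\veps)_t \to u_t$ in $L^2(0,T;X')$ as $\veps \to 0^+$.

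Next I would observe that each $u_\veps$ is a $C^\infty$ function from $\R$ into $X$ (hence into $H$), so the classical chain rule yields, pointwise in $t$,
\begin{equation*}
\frac{d}{dt} |u_\veps(t)|_H^2 = 2\bigl(u_\veps(t),(u_\veps)_t(t)\bigr)_H = 2\bang{(u_\veps)_t(t), u_\veps(t)}_{X'},
\end{equation*}
where the second equality uses the identification $H \equiv H'$ (so that whenever an element lies in $H$ the duality pairing with an element of $X$ reduces to the $H$-inner product). Integrating on $[s,t] \subset [0,T]$ gives
\begin{equation*}
|u_\veps(t)|_H^2 - |u_\veps(s)|_H^2 = 2 \int_s^t \bang{(u_\veps)_\tau(\tau), u_\veps(\tau)}_{X'} \, d\tau.
\end{equation*}

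Now I would pass to the limit $\veps \to 0^+$. The right-hand side converges because the duality pairing $\langle \cdot,\cdot\rangle_{X'}$ is continuous on $X' \times X$ and the two factors converge in the appropriate $L^2$ spaces in time. To handle the left-hand side, I would first note that the convergence $u_\veps \to u$ in $L^2(0,T;H)$ (which follows from $X \hookrightarrow H$ on the relevant sets, or directly since $u\in L^2(0,T;X)\subset L^2(0,T;H)$) implies that for a subsequence $u_\veps(t) \to u(t)$ in $H$ for almost every $t$. Therefore, after possibly redefining $u$ on a null set of times $s,t$, the integrated identity passes to the limit, yielding
\begin{equation*}
|u(t)|_H^2 - |u(s)|_H^2 = 2 \int_s^t \bang{u_\tau(\tau), u(\tau)}_{X'} \, d\tau
\end{equation*}
for almost every $s,t \in [0,T]$. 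Since the right-hand side is an absolutely continuous function of its endpoints, the function $t \mapsto |u(t)|_H^2$ admits an absolutely continuous representative, and a short argument using the weak continuity of $u$ into $H$ (coming from $u_t \in L^2(0,T;X')$ and the embedding $X \subset H$) then upgrades this to the fact that $u$ itself admits an absolutely continuous representative from $[0,T]$ into $H$. Differentiating the integral identity recovers \eqref{lem} in the distribution sense.

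The main technical obstacle is justifying the density of time-mollifications in $L^2(0,T;X')$ for the derivative, i.e., the fact that $(u_\veps)_t$ indeed converges to $u_t$ in $L^2(0,T;X')$ despite the extension and the cutoff near the endpoints; this requires choosing the extension carefully so that no spurious boundary distributions are introduced, and carrying the $X'$-valued mollification theory (which is verbatim the scalar case once one uses Bochner integration and the separability of $X$). Once this approximation step is in place, everything else reduces to a clean passage to the limit in the energy identity.
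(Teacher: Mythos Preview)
The paper does not actually prove this lemma; it is stated as a classical result and attributed to Temam's books \cite{Tbook,TT84}. Your mollification argument is precisely the standard proof found in those references, so in that sense your approach matches what the paper implicitly relies on.

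One small remark on your final step: the usual route to the $H$-continuity of $u$ is more direct than what you sketch. Rather than first establishing absolute continuity of $t\mapsto |u(t)|_H^2$ and then invoking weak continuity to upgrade, one typically applies the integrated identity to the difference $u_\veps - u_\delta$, obtaining
\[
|u_\veps(t)-u_\delta(t)|_H^2 = |u_\veps(s)-u_\delta(s)|_H^2 + 2\int_s^t \bang{(u_\veps)_\tau-(u_\delta)_\tau,\, u_\veps-u_\delta}_{X'}\,d\tau,
\]
and then averages over $s\in[0,T]$ to bound $\sup_{t}|u_\veps(t)-u_\delta(t)|_H^2$ by quantities that vanish as $\veps,\delta\to 0$. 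This shows $(u_\veps)$ is Cauchy in $C([0,T];H)$, giving the continuous representative directly. Your argument as written leaves a small gap: absolute continuity of $|u(t)|_H^2$ together with weak continuity into $H$ does not by itself yield strong continuity into $H$ without an additional uniform-convexity or Cauchy-sequence argument.
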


\bigskip

\section{THE SIMPLIFIED BARDINA MODEL} \label{sec3}
This section is devoted to prove the existence of an inertial manifold for the two-dimensional simplified Bardina model.
We apply the Helmholtz-Leray orthogonal projection $P_{\sigma}$ to equation (\ref{bar1}), and obtain the following equivalent functional differential equation (see e.g., \cite{CF88,TT84})
\begin{align}\label{bardina}
\begin{cases}
v_t+\nu Av+B(\bar{v},\bar{v})=f, \\
v=\bar{v}+\alpha^{2}A\bar{v}, \\
v(0)=v_0.
\end{cases}
\end{align}
Moreover, we assume that the forcing term and the initial data have spatial zero mean, i.e., $\int_{\Omega}f(x)dx=\int_{\Omega}{v}_0(x)dx=0$, and hence $\int_{\Omega}{v}(x,t)dx=0$, for all $t\geq 0$.

In \cite{bardina} Cao-Lunasin-Titi proved the global well-posedness of the three-dimensional viscous simplified Bardina model (\ref{bardina}), as well as the existence of a finite-dimensional global attractor. Therefore we will not discuss here the question of well-posedness and the attractor's dimension, because the two-dimensional case follows similar treatment. Notably, it was also shown in \cite{bardina} that the global regularity of the three-dimensional inviscid simplified Bardina model, i.e., when $\nu=0$. In this inviscid case, model (\ref{bardina}) coincides with the inviscid Navier-Stokes-Voigt model, namely, Euler-Voigt model which has been a subject of intensive recent analytical and computational studies (cf. \cite{KT,KLT,LT1,LT2,LRT,RT}).

Now we can quote the following theorem without proof (since it has
been proven in the 3D case in \cite{bardina}) which
states the global existence and uniqueness of regular solutions of
equation (\ref{bardina}).

\begin{theorem}\label{rs}
{\bf(Regular Solution)}  Let $f \in V'$, $v_0 \in V'$, and $T > 0$. Then there exists a unique function $v \in C([0,T];V')\cap L^2([0,T];H)$ with $v_t\in
L^2([0,T];D(A)')$ and  $v(0)=v_0$, and which satisfies (\ref{bardina}) in the following sense:
\begin{equation} \label{W-solnn}
\ang{v_t,w}_{D(A)'} + \nu\ang{Av,w}_{D(A)'}+
\left(B(\bar{v},\bar{v}),w\right) = \langle f,w \rangle_{V'},
\end{equation}
for every $w\in D(A)$. Moreover the solution $v$ depends continuously on the initial data, with respect to the $L^{\infty}([0,T];V')$ norm. Here, equation
(\ref{W-solnn}) is understood in the following sense:
for almost everywhere $t_0,t\in [0,T]$ we have
\begin{equation*}
\ang{v(t), w}_{V'}-\ang{v(t_0),w}_{V'} +
\nu\int_{t_0}^t(v,Aw)+\int_{t_0}^t
\left(B(\bar{v}(s),\bar{v}(s)),w\right) ds = \int_{t_0}^t \langle f,w \rangle_{V'}  ds.
\end{equation*}
\end{theorem}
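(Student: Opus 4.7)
The plan is to run the classical Faedo--Galerkin scheme while systematically exploiting the smoothing property of the filter $v \mapsto \bar v := (I + \alpha^2 A)^{-1} v$, which on the periodic domain gains two derivatives: $v \in V'$ corresponds to $\bar v \in V$ with the norm equivalence $\|v\|_{V'}^2 \simeq |\bar v|^2 + \alpha^2 \|\bar v\|^2$, and similarly $v \in H$ corresponds to $\bar v \in D(A)$. Letting $P_n$ denote the projector onto the span of the first $n$ eigenfunctions of $A$, I would solve the finite-dimensional ODE system
\[
\partial_t v_n + \nu A v_n + P_n B(\bar v_n, \bar v_n) = P_n f, \qquad v_n(0) = P_n v_0,
\]
for $v_n \in P_n H$, with $\bar v_n = (I + \alpha^2 A)^{-1} v_n$. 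Standard ODE theory gives local solutions, which the a priori bounds below extend to all of $[0,T]$.

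The decisive energy estimate comes from testing the Galerkin equation against $\bar v_n$. Because $v_n = \bar v_n + \alpha^2 A \bar v_n$, one has $(\partial_t v_n, \bar v_n) = \tfrac12 \tfrac{d}{dt}(|\bar v_n|^2 + \alpha^2 \|\bar v_n\|^2)$ and $(A v_n, \bar v_n) = \|\bar v_n\|^2 + \alpha^2 |A\bar v_n|^2$, while $(B(\bar v_n, \bar v_n), \bar v_n) = 0$ by (\ref{bilinear}). Bounding $2\ang{f, \bar v_n}_{V'}$ via Young's inequality by $\nu \|\bar v_n\|^2 + C\nu^{-1}\|f\|_{V'}^2$ and integrating gives $\bar v_n$ uniformly bounded in $L^\infty(0,T; V) \cap L^2(0,T; D(A))$, equivalently $v_n$ uniformly bounded in $L^\infty(0,T; V') \cap L^2(0,T; H)$. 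Plugging these bounds into (\ref{in}) puts $B(\bar v_n, \bar v_n)$ uniformly in $L^2(0,T; H)$, and so reading the equation backwards gives $\partial_t v_n$ uniformly in $L^2(0,T; D(A)')$. Aubin--Lions compactness then extracts a subsequence converging strongly in $L^2(0,T; V')$ to a limit $v$; the strong convergence transfers through the filter to strong $L^2(0,T; V)$ convergence of $\bar v_n$, which is more than enough to pass to the limit in the nonlinearity and obtain a solution of (\ref{W-solnn}). Continuity $v \in C([0,T]; V')$ follows from the abstract lemma of the preliminaries applied with a suitable Hilbert triple, e.g.\ $X = H$ and pivot $V'$.

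For uniqueness and continuous dependence, I would subtract two solutions $v_1, v_2$; the difference $w = v_1 - v_2$, $\bar w = \bar v_1 - \bar v_2$ satisfies
\[
w_t + \nu A w + B(\bar w, \bar v_1) + B(\bar v_2, \bar w) = 0.
\]
Pairing with $\bar w$ in the $D(A)',D(A)$ duality and using (\ref{bilinear}) to eliminate the $B(\bar v_2, \bar w)$ contribution yields
\[
\tfrac{d}{dt}\bigl(|\bar w|^2 + \alpha^2 \|\bar w\|^2\bigr) + 2\nu\bigl(\|\bar w\|^2 + \alpha^2 |A\bar w|^2\bigr) = -2\ang{B(\bar w, \bar v_1), \bar w}.
\]
The standard 2D trilinear estimate $|\ang{B(w_1, w_2), w_3}| \le c|w_1|^{1/2}\|w_1\|^{1/2}\|w_2\||w_3|^{1/2}\|w_3\|^{1/2}$, together with Young's inequality, bounds the right-hand side by $\nu\|\bar w\|^2 + C|\bar w|^2\|\bar v_1\|^2$; since $\|\bar v_1\|^2 \in L^\infty(0,T)$ from the energy estimate, Gronwall concludes. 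The main technical care required is the careful bookkeeping of the filter's norm equivalences, so that energy identities written in the $\bar v$-variable translate into bounds on the ambient $V'$-norm of $v$; once this correspondence is in place, the argument mirrors the 3D treatment in \cite{bardina} and is strictly easier in 2D thanks to the sharper bilinear bound (\ref{in}).
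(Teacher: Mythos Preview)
Your Galerkin argument is correct and is the standard route; note, however, that the paper does not actually supply a proof of this theorem. It explicitly states the result ``without proof (since it has been proven in the 3D case in \cite{bardina})'' and simply imports the well-posedness from Cao--Lunasin--Titi, remarking that the two-dimensional case follows the same treatment. So there is no in-paper argument to compare against; what you have written is essentially a sketch of the proof in \cite{bardina}, specialized to 2D, and your observation that the filter converts the $V'$-level estimates for $v$ into $V$-level estimates for $\bar v$ (so that the energy identity obtained by pairing with $\bar v$ is exactly the one the paper uses later in subsection~\ref{sub1}) is the right organizing principle.

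One small point worth tightening: for the continuity $v\in C([0,T];V')$ you invoke Lemma~1 with ``$X=H$ and pivot $V'$.'' For that to match the lemma's hypotheses you need the identification of $H'$ (dual taken with respect to the $V'$ pivot) with $D(A)'$, which holds here because $(I+\alpha^2 A)^{-1}$ and powers of $A$ shift the scale of Sobolev-type spaces isometrically; it is cleaner to phrase everything in the $\bar v$-variable, where the triple is the familiar $D(A)\subset V\subset H$ and the lemma applies verbatim to give $\bar v\in C([0,T];V)$, hence $v\in C([0,T];V')$.
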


\smallskip

\subsection{Asymptotic estimates for the long-time dynamics}  \label{bardsec}
This section is devoted to establishing appropriate \emph{a priori} estimates for the long-time dynamics of the solution of (\ref{bardina}).
In particular, we are required to justify the existence of absorbing balls for the
dynamical system induced by equation (\ref{bardina}), in various spaces of functions.
This is needed for our proof for the existence of inertial manifolds.
The estimates provided here are done
formally, but one can prove them rigorously, e.g., by using the Galerkin approximation scheme.
Throughout the following estimates, we assume the forcing $f\in V'$, and the initial data $v(0)\in V'$, thus the corresponding $\bar{v}(0)\in V$.

\subsubsection{$H^{1}$-estimate for $\bar{v}$} \label{sub1}
We take the $D(A)'$ action of equation (\ref{bardina}) on
$\bar{v}$ and use the identities (\ref{bilinear}) and (\ref{lem}) to obtain
\begin{equation}  \label{H1-1}
\frac{1}{2}\frac{d}{dt}(|\bar{v}|^{2}+\alpha^{2}\|\bar{v}\|^{2})+
\nu(\|\bar{v}\|^{2}+\alpha^{2}|A\bar{v}|^{2})=\langle f,\bar{v} \rangle.
\end{equation}
By the Cauchy-Schwarz and Young's inequalities, we have
\begin{equation*}
|\langle f,\bar{v}\rangle|=|(A^{-1}f,A\bar v)|\leq |A^{-1}f||A\bar v|\leq
\frac{|A^{-1}f|^{2}}{2\alpha^{2}\nu}+\frac{\alpha^{2}\nu}{2}|A\bar{v}|^{2}.
\end{equation*}
Consequently, we obtain
\begin{equation*}
\frac{d}{dt}(|\bar{v}|^{2}+\alpha^{2}\|\bar{v}\|^{2})+
\nu(\|\bar{v}\|^{2}+\alpha^{2}|A\bar{v}|^{2})\leq
\frac{|A^{-1}f|^{2}}{\alpha^{2}\nu}.
\end{equation*}

Applying Poincar\'{e} inequality (\ref{poin}) we get
\begin{equation*}
\frac{d}{dt}(|\bar{v}|^{2}+\alpha^{2}\|\bar{v}\|^{2})+
\nu\lambda_{1}(|\bar{v}|^{2}+\alpha^{2}\|\bar{v}\|^{2})\leq
\frac{|A^{-1}f|^{2}}{\alpha^{2}\nu}.
\end{equation*}
We then use Gronwall's inequality to deduce
\begin{equation*}
|\bar{v}(t)|^{2}+\alpha^{2}\|\bar{v}(t)\|^{2}\leq
e^{-\nu\lambda_{1}(t-t_0)}(|\bar{v}(t_{0})|^{2}+\alpha^{2}\|\bar{v}(t_{0})\|^{2})+
\frac{1-e^{-\nu\lambda_{1}(t-t_{0})}}{\alpha^{2}\lambda_{1}\nu^{2}}|A^{-1}f|^{2},
\end{equation*}
for all $t\geq t_{0}\geq 0$. Therefore
\begin{equation*}
\limsup_{t\rightarrow \infty}
(|\bar{v}(t)|^{2}+\alpha^{2}\|\bar{v}(t)\|^{2})\leq
\frac{1}{\alpha^{2}\lambda_{1}\nu^{2}}|A^{-1}f|^{2}.
\end{equation*}
In particular, it follows that
\begin{align*}
\limsup_{t\rightarrow \infty} (1+\alpha^2 \lambda_1) |\bar v(t)|^2 \leq \frac{1}{\alpha^{2}\lambda_{1}\nu^{2}}|A^{-1}f|^{2}
\text{\;\;and\;\;} \limsup_{t\rightarrow \infty} \alpha^{2} \|\bar{v}(t)\|^{2}\leq
\frac{1}{\alpha^{2}\lambda_{1}\nu^{2}}|A^{-1}f|^{2}.
\end{align*}
This immediately implies
\begin{align}
&\limsup_{t\rightarrow \infty}|\bar v(t)|\leq \frac{1}{2}\rho_0:= \left[(1+\alpha^2 \lambda_1)\alpha^{2}\lambda_{1}\nu^{2} \right]^{-\frac{1}{2}}|A^{-1}f| ;   \notag \\
&\limsup_{t\rightarrow \infty} \|\bar v(t)\| \leq \frac{1}{2}\rho_1:=(\alpha^4 \lambda_1 \nu^2)^{-\frac{1}{2}}|A^{-1}f|.    \label{ko}
\end{align}
Thanks to the above, we conclude that, the solution $\bar v(t)$,  after long enough time, enters a ball in $H$, centered at the origin, with radius $\rho_0$. Also, $\bar v(t)$ enters a ball in $V$ with radius $\rho_1$. Notice the growth of $\rho_0$ and $\rho_1$ with respect to the shrinking of $\nu$ satisfies $\rho_0 \sim \nu^{-1}$ and $\rho_1 \sim \nu^{-1}$ asymptotically.

\smallskip

\subsubsection{$H^{2}$-estimate on $\bar{v}$ ($L^2$-estimate on $v$)} \label{sub2}
We take the $D(A)'$ action of equation (\ref{bardina}) on $A\bar{v}$ by using (\ref{lem}), and employ the identity $(B(\bar v,\bar v),A \bar v)=0$ (which is only valid in 2D periodic case, c.f. \cite{CF88,Tbook}). It follows that
\begin{equation*}
\frac{1}{2}\frac{d}{dt}(\|\bar{v}\|^{2}+\alpha^{2}|A\bar{v}|^{2})+
\nu(|A\bar{v}|^{2}+\alpha^{2}|A^{3/2}\bar{v}|^{2})=\langle f,A\bar{v}\rangle.
\end{equation*}

By Cauchy-Schwarz inequality and Young's inequality, we have
\begin{equation*}
|\langle f,A\bar{v}\rangle|=   |(A^{-\frac{1}{2}}f,A^{\frac{3}{2}}\bar{v})| \leq
\frac{|A^{-1/2}f|^{2}}{2\alpha^{2}\nu}+\frac{\alpha^{2}\nu}{2}|A^{3/2}\bar{v}|^{2}.
\end{equation*}
As a result, we reach to
\begin{equation*}
\frac{d}{dt}(\|\bar{v}\|^{2}+\alpha^{2}|A\bar{v}|^{2})+
\nu(|A\bar{v}|^{2}+\alpha^{2}|A^{3/2}\bar{v}|^{2})\leq
\frac{|A^{-1/2}f|^{2}}{\alpha^{2}\nu}.
\end{equation*}
Applying Poincar\'{e} inequality (\ref{poin}) followed by Gronwall's inequality, one has
\begin{equation}\label{Aest}
\|\bar{v}(t)\|^{2}+\alpha^{2}|A\bar{v}(t)|^{2}\leq
e^{-\nu\lambda_{1}(t-t_{0})}(\|\bar{v}(t_{0})\|^{2}+\alpha^{2}|A\bar{v}(t_{0})|^{2})+
\frac{1-e^{-\nu\lambda_{1}(t-t_{0})}}{\alpha^{2}\lambda_{1}\nu^{2}}|A^{-1/2}f|^{2},
\end{equation}
for all $t\geq t_{0}>0$.
Thus,
\begin{equation} \label{limsup2}
\limsup_{t\rightarrow\infty}(\|\bar{v}(t)\|^{2}+\alpha^{2}|A\bar{v}(t)|^{2})\leq \frac{1}{\alpha^{2}\lambda_{1}\nu^{2}}|A^{-1/2}f|^{2}.
\end{equation}
In particular, it follows that
\begin{align*}
&\limsup_{t\rightarrow \infty}\|\bar v(t)\|\leq \frac{1}{2}\tilde \rho_1:= \left[(1+\alpha^2 \lambda_1)\alpha^{2}\lambda_{1}\nu^{2} \right]^{-\frac{1}{2}}|A^{-\frac{1}{2}}f|; \\
&\limsup_{t\rightarrow \infty} |A\bar v(t)| \leq \frac{1}{2}\rho_2:=(\alpha^4 \lambda_1 \nu^2)^{-\frac{1}{2}}|A^{-\frac{1}{2}}f|.
\end{align*}
The above estimate along with (\ref{ko}) shows that $\|\bar v(t)\|\leq \min\{\rho_1,\tilde \rho_1\}$ for sufficiently large time $t$. Also, $\bar v(t)$ enters a ball with radius $\rho_2$ in $D(A)$ after long enough time.

Furthermore, since $v=\bar v+\alpha^2 A \bar v$, one has
$$\limsup_{t\rightarrow \infty} |v(t)| \leq \limsup_{t\rightarrow \infty}|\bar v(t)|+\alpha^2 \limsup_{t\rightarrow \infty}|A\bar v(t)|
\leq (\rho_0+\alpha^2 \rho_2)/2.$$ Thus, after sufficiently large time, $v(t)$ enter a ball in $H$ with the radius $\rho:=\rho_0+\alpha^2 \rho_2.$ Also, note that $\rho \sim \nu^{-1}$ asymptotically.

\smallskip

\subsection{Existence of an inertial manifold} \label{mani}
Denote $R(v):=B(\bar{v},\bar{v})$, then equation (\ref{bardina})
 takes the form
 \begin{align}\label{r}
 \frac{dv}{dt}+\nu Av+R(v)=f,
 \end{align}
 where we assume that $f\in V'$. From the energy estimate in subsection \ref{sub2}, we see that for positive time $t$, one has $\bar v(t)\in D(A)$, and thus $v(t)\in H$ for $t>0$. Moreover, for sufficient large $t$, the solution $v(t)$ enters a ball with radius $\rho$.
 Since we are concerning the large-time behavior of solutions, without loss of generality we can assume $v_0 \in H$, throughout the following discussion.

 Notice that the nonlinear operator $R$ is locally Lipschitz from $H$ to $H$. Indeed, let $v_1$, $v_2\in H$, then the corresponding $\bar v_1$, $\bar v_2\in D(A)$. Furthermore, since $v=\bar v+\alpha^2 A \bar v$, one has $\bar v=(I+\alpha^2 A)^{-1}v$, and thus
\begin{align}  \label{change}
|A \bar v|=|A (I+\alpha^2 A)^{-1} v|  \leq \frac{1}{\alpha^2}|v|.
\end{align}
Then, by using (\ref{in}), along with Poincar\'e inequality and estimate (\ref{change}), we infer
\begin{align}   \label{lips}
|R(v_1)-R(v_2)|&=|B(\bar v_1,\bar v_1)-B(\bar v_2,\bar v_2)|  \notag  \\
&=|B(\bar v_1,\bar v_1-\bar v_2)|+|B(\bar v_1-\bar v_2, \bar v_2)|  \notag\\
&\leq c|\bar v_1|^{\frac{1}{2}} \|\bar v_1\|^{\frac{1}{2}} \|\bar v_1-\bar v_2 \|^{\frac{1}{2}} |A \bar v_1-A \bar v_2|^{\frac{1}{2}}
+c|\bar v_1-\bar v_2|^{\frac{1}{2}}\|\bar v_1-\bar v_2\|^{\frac{1}{2}} \|\bar v_2\|^{\frac{1}{2}} |A \bar v_2|^{\frac{1}{2}} \notag \\
&\leq c\lambda_1^{-1}  (|A\bar v_1|+|A\bar v_2|) |A\bar v_1-A \bar v_2|   \notag\\
&\leq c\lambda_1^{-1} \alpha^{-4} (|v_1|+|v_2|) |v_1-v_2|.
\end{align}

 As in \cite{CF88,fsr,fst,Tbook}, in order to avoid certain technical difficulties for large values of $|v|$, resulting from the nonlinearity, we truncate the nonlinear term by a smooth cutoff function outside the ball of radius $2\rho$ in $H$. Indeed, let $\theta: \mathbb R^+ \rightarrow [0,1]$ with $\theta(s)=1$ for $0\leq s \leq 1$, $\theta(s)=0$ for $s\geq 2$, and $|\theta'(s)|\leq 2$ for $s\geq 0$. Define $\theta_{\rho}(s)=\theta(s/\rho)$, for $s\geq 0$. We consider the following ``prepared" equation, which is a modification of (\ref{r}):
 \begin{equation} \label{rr}
 \frac{dv}{dt}+\nu Av+\theta_{\rho}(|v|)(R(v)-f)=0.
 \end{equation}
 Notice that (\ref{r}) and (\ref{rr}) have the same asymptotic behaviors in time, and the same dynamics in the neighborhood of the global attractor. This is because we have shown that for $t$ sufficiently large, $v(t)$ enters a ball in $H$ with radius $\rho$.
 On the other hand, the advantage of (\ref{rr}) compared to (\ref{r}) is that (\ref{rr}) possesses an absorbing \emph{invariant} ball in $H$. To see this, take the scalar product of (\ref{rr}) with $v$, and then for $|v|\geq 2\rho$, one has
\begin{align*}
\frac{1}{2}\frac{d}{dt}|v|^2 + \lambda_1 \nu |v|^2 \leq \frac{1}{2}\frac{d}{dt}|v|^2 +  \nu \|v\|^2 =0,
\end{align*}
since $\theta_{\rho}(|v|)=0$ for $|v|\geq 2\rho$. It follows that, if $|v_0| > 2\rho$, the orbit of the solution to (\ref{rr}) will converge exponentially to the ball of radius 2$\rho$ in $H$, while if $|v_0|\leq 2\rho$, the solution does not leave this ball.

Furthermore, since $R: H\rightarrow H$ is locally Lipschitz, the truncated nonlinearity $F(v):=\theta_{\rho}(|v|)R(v)$ is \emph{globally} Lipschitz from $H$ to $H$. To see this, we let $v_1$, $v_2\in H$, and calculate for three cases:
\begin{enumerate}
\item if $|v_1|\geq 2\rho$ and $|v_2|\geq 2\rho$, then $F(v_1)=F(v_2)=0$;
\item if $|v_1|\geq 2\rho \geq |v_2|$, then $\theta_{\rho}(|v_1|)=0$, thus
\begin{align*}
|F(v_1)-F(v_2)|&=|\theta_{\rho}(|v_1|)R(v_2)-\theta_{\rho}(|v_2|)R(v_2)| \\
&\leq \frac{2}{\rho}|v_1-v_2| |R(v_2)|
\leq c\rho  \lambda^{-1} \alpha^{-4} |v_1-v_2|,
\end{align*}
by virtue of (\ref{lips}) and the property of $\theta$.
\item if $|v_1|\leq 2\rho$ and $|v_2|\leq 2\rho$, then
\begin{align*}
|F(v_1)-F(v_2)|
&\leq |\theta_{\rho}(|v_1|)(R(v_1)-R(v_2))|+|R(v_2)(\theta_{\rho}(|v_1|)-  \theta_{\rho}(|v_2|)    )    | \\
&\leq c \rho \lambda_1^{-1} \alpha^{-4} |v_1-v_2|,
\end{align*}
due to (\ref{lips}) and the property of $\theta$.
\end{enumerate}
A summary of these three cases yields
\begin{align} \label{sum}
|F(v_1)-F(v_2)|\leq  \mathscr L |v_1-v_2|, \text{\;\;where\;\;}  \mathscr L:=c\rho \lambda_1^{-1} \alpha^{-4}.
\end{align}

Since the nonlinearity of (\ref{rr}) is globally Lipschitz, we shall see that equation (\ref{rr}) possesses the \emph{strong squeezing property} stated in Proposition \ref{prop1}, provided certain spectral gap condition is fulfilled. Indeed, for $\gamma>0$ and $n\in \mathbb N$, we define the cone
\begin{align}  \label{cone}
\Gamma_{n,\gamma}:=\left\{ \begin{pmatrix}v_1\\v_2\end{pmatrix} \in H \times H: |Q_n(v_1-v_2)| \leq \gamma |P_n(v_1-v_2)|   \right \}.
\end{align}
The strong squeezing property asserts: if the dynamics of two trajectories starts inside the cone $\Gamma_{n,\gamma}$, then the trajectories stay inside the cone forever, and the higher Fourier modes of the difference are dominated by the lower modes (i.e. \emph{the cone invariance property}); on the other hand, for as long as the two trajectories are outside the cone, then the higher Fourier modes of the difference decay exponentially fast (i.e. \emph{the decay property}). More precisely, we have the following result.

\begin{proposition} \label{prop1}
Let $v_1$ and $v_2$ be two solutions of (\ref{rr}). Then (\ref{rr}) satisfies the following properties:
\begin{enumerate}
\item The \emph{cone invariance property}: Assume that $n$ is large enough such that the spectral gap condition $\lambda_{n+1}-\lambda_n > \frac{\mathscr L (\gamma+1)^2}{\nu \gamma} $ holds. If $\begin{pmatrix}v_1(t_0)\\v_2(t_0)\end{pmatrix}\in \Gamma_{n,\gamma} $ for some $t_0\geq 0$,
then $\begin{pmatrix}v_1(t)\\v_2(t)\end{pmatrix}\in \Gamma_{n,\gamma} $ for all $t\geq t_0$;

\item The \emph{decay property}: Assume that $n$ is large enough such that $\lambda_{n+1}>\nu^{-1}\mathscr L\left(\frac{1}{\gamma}+1\right)$. If $\begin{pmatrix}v_1(t)\\v_2(t)\end{pmatrix} \not\in \Gamma_{n,\gamma} $ for $0\leq t \leq  T$, then
\begin{align*}
|Q_n (v_1(t)-v_2(t))| \leq |Q_n(v_1(0))-v_2(0)|e^{-b_n t}, \text{\;\;for\;\;} 0\leq t\leq T,
\end{align*}
where $b_n  := \nu \lambda_{n+1}-\mathscr L\left(\frac{1}{\gamma}+1\right)>0$.
\end{enumerate}
\end{proposition}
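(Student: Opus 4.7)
\medskip
\noindent\textbf{Proof proposal.} The strategy is to track the evolution of the low and high Fourier modes of the difference $w:=v_1-v_2$ separately, and then to combine them through a suitable Lyapunov-type functional. Subtracting the two copies of (\ref{rr}) yields
\begin{equation*}
w_t+\nu Aw+\bigl[G(v_1)-G(v_2)\bigr]=0,
\end{equation*}
where $G(v):=\theta_{\rho}(|v|)(R(v)-f)$. The computation leading to (\ref{sum}), together with the trivially Lipschitz contribution $\theta_{\rho}(|v|)f$ (with Lipschitz constant $\frac{2}{\rho}|f|$), makes $G:H\to H$ globally Lipschitz; I shall continue to denote its Lipschitz constant by $\mathscr L$ by slight abuse. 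Setting $p:=P_n w$ and $q:=Q_n w$, projecting the difference equation by $P_n$ and $Q_n$, taking $H$-inner products with $p$ and $q$ respectively, and using the spectral bounds $(Ap,p)\leq \lambda_n|p|^2$ and $(Aq,q)\geq \lambda_{n+1}|q|^2$ together with the fact that $P_n,Q_n$ have operator norm one, yields the coupled inequalities
\begin{align*}
\frac{1}{2}\frac{d}{dt}|p|^2 &\geq -\nu\lambda_n|p|^2-\mathscr L\,|w|\,|p|,\\
\frac{1}{2}\frac{d}{dt}|q|^2 &\leq -\nu\lambda_{n+1}|q|^2+\mathscr L\,|w|\,|q|.
\end{align*}

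For part (i), I would introduce the functional $\Phi(t):=|q(t)|^2-\gamma^2|p(t)|^2$, so that the cone condition amounts to $\Phi\leq 0$. On the boundary $|q|=\gamma|p|$, orthogonality gives $|w|^2=|p|^2+|q|^2=(1+\gamma^2)|p|^2$, hence $|w|\leq(1+\gamma)|p|$ and $|q|+\gamma^2|p|=\gamma(1+\gamma)|p|$. Combining the two displayed inequalities then produces
\begin{equation*}
\frac{d\Phi}{dt}\leq -2\gamma^2\nu(\lambda_{n+1}-\lambda_n)|p|^2+2\mathscr L\gamma(1+\gamma)^2|p|^2
\end{equation*}
on this boundary, which is non-positive precisely under the stated spectral gap hypothesis. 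A standard maximality/continuity argument --- set $\tau:=\inf\{t\geq t_0:\Phi(t)>0\}$, invoke continuity to deduce $\Phi(\tau)=0$, then contradict $\Phi'(\tau)\leq 0$ --- rules out escape from $\Gamma_{n,\gamma}$; the degenerate case $|p|=0$ on the boundary forces $q=0$ and is trivial.

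For part (ii), whenever the pair lies outside $\Gamma_{n,\gamma}$ we have $|q|>\gamma|p|$, and hence $|w|\leq|p|+|q|<(1+\gamma^{-1})|q|$. Substituting this into the differential inequality for $|q|^2$ gives
\begin{equation*}
\frac{d}{dt}|q|^2\leq -2\bigl[\nu\lambda_{n+1}-\mathscr L(1+\gamma^{-1})\bigr]|q|^2=-2b_n|q|^2,
\end{equation*}
and Gronwall's lemma yields the claimed exponential decay on $[0,T]$.

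The individual energy estimates are routine once the global Lipschitz bound (\ref{sum}) is in hand; the main technical delicacy is the continuity/contradiction step in (i), where one must verify that the sign of $\Phi'$ on $\{\Phi=0\}$ really precludes trajectories from crossing into $\{\Phi>0\}$, rather than being merely a tangential condition. The truncation that produces (\ref{sum}) is indispensable here, since without it the Lipschitz constant would scale with $|v|$ and the clean linear-in-$\mathscr L$ spectral gap condition would degrade accordingly.
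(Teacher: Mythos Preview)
Your proof is correct and follows essentially the same strategy as the paper's appendix: project the difference equation onto low/high modes, derive coupled differential inequalities using the global Lipschitz bound (\ref{sum}), and verify cone invariance via the spectral gap plus exponential decay outside the cone via Gronwall. The only cosmetic difference is that you track the quadratic functional $\Phi=|q|^2-\gamma^2|p|^2$, whereas the paper first divides by the norm to obtain linear inequalities for $|p|$ and $|q|$ and then works with $|q|-\gamma|p|$; both routes yield the identical spectral gap condition and handle the degenerate case $p=0$ in the same way.
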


\begin{proof}
See the appendix.
\end{proof}

Notice that, the eigenvalues of the operator $A$ satisfies the spectral gap condition:
\begin{align}   \label{gap}
\limsup_{j\rightarrow\infty}(\lambda_{j+1}-\lambda_{j})=\infty.
\end{align}
Indeed, since the eigenvalues of $A$ in the periodic domain are of the form $L^{-2}(k_{1}^{2}+k_{2}^{2})$,
the spectral gap condition (\ref{gap}) is available due to a classical result in number theory:
\begin{theorem}{\emph{(Richards \cite{Ri})}}  \label{rich}
The sequence $\{\gamma_{k}=m_1^{2}+m_2^{2}: m_1,m_2\in\mathbb{Z} \ \ \text{and}
\ \ \gamma_{k+1}\geq \gamma_{k}\}$ has a subsequence
$\{\gamma_{k_{j}}\}$ such that $\gamma_{k_{j+1}}-\gamma_{k_{j}}\geq
\delta\log(\gamma_{k_{j}})$ for some $\delta>0$.
\end{theorem}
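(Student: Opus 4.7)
The plan is to exhibit, for arbitrarily large $N$, a block of $k$ consecutive integers none of which is representable as a sum of two squares, with $k$ growing like a positive multiple of $\log N$. The number-theoretic input is the classical characterization: a positive integer $n$ is a sum of two integer squares if and only if every prime $p \equiv 3 \pmod 4$ dividing $n$ does so to an even power.

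First I would fix an integer $k$ and let $p_1 < p_2 < \cdots < p_k$ denote the first $k$ primes congruent to $3$ modulo $4$. By the Chinese Remainder Theorem the simultaneous system
\begin{equation*}
N + i \equiv p_i \pmod{p_i^{2}}, \qquad i = 1, 2, \ldots, k,
\end{equation*}
has a solution with $1 \leq N \leq \prod_{i=1}^{k} p_i^{2}$. Writing $N+i = p_i(1 + m p_i)$ for some integer $m \geq 0$ shows $v_{p_i}(N+i) = 1$, an odd exponent, so by the characterization above each of $N+1, N+2, \ldots, N+k$ fails to lie in $\{\gamma_j\}$. Thus the sequence of sums of two squares has a gap of size at least $k+1$ in an interval below $\prod_{i} p_i^{2}$.

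Next I would quantify $N$ via the prime number theorem for the arithmetic progression $3 \pmod 4$: since $\sum_{p \leq x,\, p \equiv 3\,(4)} \log p \sim x/2$ and $p_k \sim 2 k \log k$, one has $\log N \leq 2 \sum_{i=1}^{k} \log p_i = O(k \log k)$, which on inversion produces a block of at least $c \log N / \log \log N$ consecutive non-sums of two squares near $N$ for some $c>0$. This weaker quantitative bound is already sufficient to conclude $\limsup_{j} (\gamma_{k_{j+1}} - \gamma_{k_j}) = \infty$ and hence to force the spectral gap condition (\ref{gap}), which is the only consequence of Theorem \ref{rich} actually invoked in the paper.

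The main obstacle is sharpening the bound from $\log N / \log \log N$ up to the linear $\delta \log N$ that is stated. The gain comes from letting each small prime $p \equiv 3 \pmod 4$ annihilate every position in an entire arithmetic progression modulo $p$ in one stroke, so that its contribution to $\log N$ is only $\log p$ rather than $(\log p) \cdot \#\{\text{positions it handles}\}$; a Mertens-type aggregation then replaces the crude sum $\sum_{i=1}^{k} \log p_i = O(k \log k)$ by a combined contribution of order $O(k)$. The delicate combinatorial step is to choose the residues $N \pmod{p^{2}}$ for the small primes so that together they genuinely cover every one of the $k$ positions with odd $p$-adic valuation. This sieve construction is precisely the one carried out in \cite{Ri}, and it is the point at which I would appeal to Richards' paper rather than reproduce the bookkeeping.
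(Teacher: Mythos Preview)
The paper does not prove Theorem~\ref{rich} at all: it is stated with attribution to Richards~\cite{Ri} and used as a black box to verify the spectral gap condition~(\ref{gap}). There is therefore no ``paper's own proof'' against which to compare your attempt.

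That said, your sketch is a sensible outline of the standard approach. The Chinese Remainder construction you describe is correct and does give gaps of order $\log N/\log\log N$, and you are right that the improvement to $\delta\log N$ requires the more careful sieve allocation carried out in~\cite{Ri}. You also correctly observe that the paper only invokes the consequence $\limsup_j(\lambda_{j+1}-\lambda_j)=\infty$, for which your weaker bound already suffices. One minor caution: in your third paragraph the mechanism you describe (letting a single prime $p\equiv 3\pmod 4$ handle an entire residue class mod $p$) is not quite right as stated, since forcing $p\,\|\,(N+i)$ requires a congruence mod $p^2$, not merely mod $p$; the actual saving in Richards' argument comes from a different combinatorial organization of which positions are assigned to which primes. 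But since you explicitly defer to~\cite{Ri} at that point, this is not a gap in your write-up so much as an imprecise heuristic.
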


Obviously, (\ref{gap}) implies the required condition in Proposition \ref{prop1}, i.e., there exists $n\in \mathbb N$ such that
 $\lambda_{n+1}-\lambda_n > \frac{4 \mathscr L}{\nu}$ and $\lambda_{n+1}>\nu^{-1}\mathscr L\left(\frac{1}{\gamma}+1\right)$, and thus for such $n$ large enough, the strong squeezing property holds for the ``prepared" equation (\ref{rr}).

\begin{definition} \emph{(\bf{Inertial Manifold})} \cite{fsr}
Consider the solution operator $S(t)$ generated by the ``prepared" equation (\ref{rr}). A subset $\mathcal M\in H$ is called an \emph{initial manifold} for (\ref{rr}) if the following properties are satisfied :
\begin{enumerate}
\item $\mathcal M$ is a finite-dimensional Lipschitz manifold;
\item $\mathcal M$ is invariant, i.e. $S(t) \mathcal M\subset \mathcal M$, for
all $t\geq 0$;
\item\label{ae} $\mathcal M$ attracts exponentially all the solutions of (\ref{rr}).
\end{enumerate}
\end{definition}
Clearly, property (\ref{ae}) implies that $\mathcal M$ contains the global attractor.

Next, we state a fundamental theorem concerning that the strong squeezing property implies the existence of an inertial manifold and the \emph{exponential tracking} (cf. \cite{fst}) for dissipative evolution equations. There are several proofs of this theorem that can be found in \cite{CFNT1,Foias-88,fst,Robin,Tbook}.

\begin{theorem} \label{thm-g}
Consider a nonlinear evolutionary equation of the type $v_t+A v+ N(v)=0$, where $A$ is a linear, unbounded self-adjoint positive operator, acting in a Hilbert space $H$, such that $A^{-1}$ is compact, and $N: H\rightarrow H$ is a nonlinear operator. Assume the solution $v(t)$ enters a ball in $H$ with the radius $\rho$ for sufficiently large time $t$. For $\gamma>0$ and $n\in \mathbb N$, we define the cone $\Gamma_{n,\gamma}$ in (\ref{cone}). Assume there exists $n\in \mathbb N$ such that the ``prepared" equation  $v_t+A v+ \theta_{\rho}(|v|)N(v)=0$ satisfies the strong squeezing property, i.e., for any two solutions $v_1$ and $v_2$ of the ``prepared" equation,
\begin{itemize}
\item  if $\begin{pmatrix}v_1(t_0)\\v_2(t_0)\end{pmatrix}\in \Gamma_{n,\gamma} $ for some $t_0\geq 0$,
then $\begin{pmatrix}v_1(t)\\v_2(t)\end{pmatrix}\in \Gamma_{n,\gamma} $ for all $t\geq t_0$;
\item if $\begin{pmatrix}v_1(t)\\v_2(t)\end{pmatrix} \not\in \Gamma_{n,\gamma} $ for $0\leq t \leq  T$, then there exists $a_n>0$ such that
\begin{align*}
|Q_n (v_1(t)-v_2(t))|_H \leq e^{-a_n t} |Q_n(v_1(0))-v_2(0)|_H , \text{\;\;for\;\;} 0\leq t\leq T.
\end{align*}
\end{itemize}
Then the ``prepared" equation possesses an $n$-dimensional inertial manifold in $H$. In addition, the following exponential tracking property holds: for any $v_0\in H$, there exists a time $\tau\geq 0$ and a solution $S(t)\varphi_0$ on the inertial manifold such that
\begin{align*}
|S(t+\tau)v_0-S(t)\varphi_0|_H \leq C e^{-a_n t},
\end{align*}
where the constant $C$ depends on $|S(\tau)v_0|_H$ and $|\varphi_0|_H$.
\end{theorem}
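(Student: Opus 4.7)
The plan is to realize the inertial manifold as the graph of a globally Lipschitz function $\Phi \colon P_n H \to Q_n H$ via the Lyapunov-Perron fixed point scheme (as developed in \cite{fst,Tbook}). Write $v = p + q$ with $p = P_n v$ and $q = Q_n v$, and set $F(v) := \theta_{\rho}(|v|) N(v)$; under the hypotheses of the theorem $F$ is globally Lipschitz from $H$ to $H$ with some constant $\mathscr{L}$, and vanishes outside the ball of radius $2\rho$. The key observation is that on $Q_n H$ the operator $A$ has spectrum in $[\lambda_{n+1}, \infty)$ while on $P_n H$ it has spectrum in $[\lambda_1, \lambda_n]$; the quantitative spectral gap encoded in the strong squeezing property is precisely what allows these two regimes to be separated in a weighted norm.

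Fix $p_0 \in P_n H$. On the weighted Banach space
\[
\mathcal{X}_\eta := \left\{ v \in C((-\infty, 0]; H) : \sup_{t \leq 0} e^{\eta t}|v(t)| < \infty \right\}
\]
for a well-chosen $\eta \in (\lambda_n, \lambda_{n+1})$, define the Lyapunov-Perron operator
\[
(\mathcal{T} v)(t) := e^{-A t} p_0 - \int_0^t e^{-A(t-s)} P_n F(v(s))\, ds - \int_{-\infty}^t e^{-A(t-s)} Q_n F(v(s))\, ds.
\]
Its fixed points are precisely the trajectories of the prepared equation that are bounded in backward time and satisfy $P_n v(0) = p_0$. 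Standard estimates using the spectral gap show that $\mathcal{T}$ is a strict contraction on $\mathcal{X}_\eta$; its fixed point $v^{*}_{p_0}$ depends Lipschitz continuously on $p_0$, and the cone invariance property of Proposition \ref{prop1} furnishes the uniform Lipschitz bound $\gamma$. Setting $\Phi(p_0) := Q_n v^{*}_{p_0}(0)$ and $\mathcal{M} := \{ p + \Phi(p) : p \in P_n H\}$ produces the claimed $n$-dimensional Lipschitz invariant manifold, with invariance following from the time-translation equivariance of the scheme.

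For exponential tracking, given $v_0 \in H$ one seeks $\varphi_0 \in \mathcal{M}$ and a shift $\tau \geq 0$ such that the pair $(S(t + \tau) v_0, S(t)\varphi_0)$ eventually lies in the cone $\Gamma_{n,\gamma}$; cone invariance then forces the two trajectories to be close in high modes, while the decay property forces the high-mode difference to decay at rate $e^{-a_n t}$ during any excursion outside the cone, yielding the stated $H$-estimate. The existence of such a $\varphi_0$ follows from a continuity/degree argument in the finite-dimensional space $P_n H$ that matches the low modes of $S(\tau) v_0$ to a point of $\mathcal{M}$. The main obstacle is verifying the contraction of $\mathcal{T}$: one must choose $\eta$ so that $\eta - \lambda_n$ and $\lambda_{n+1} - \eta$ are each large enough relative to $\mathscr{L}$ to dominate the Lipschitz norm of $F$ simultaneously in the $P_n$-integral from $0$ to $t$ and the $Q_n$-integral from $-\infty$ to $t$; this is exactly what the spectral gap assumption of Proposition \ref{prop1} delivers. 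Once the contraction is established, the remaining properties follow by routine estimates, as detailed in \cite{CFNT1,Foias-88,fst,Robin,Tbook}.
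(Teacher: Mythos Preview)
The paper does not supply its own proof of Theorem~\ref{thm-g}; it is quoted as a known result, with the remark that ``there are several proofs of this theorem that can be found in \cite{CFNT1,Foias-88,fst,Robin,Tbook}.'' Your sketch follows the Lyapunov--Perron fixed-point route of \cite{fst,Tbook}, which is indeed one of those standard proofs, so in that sense your proposal is consistent with what the paper invokes.

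One point worth flagging: the theorem as stated takes the \emph{strong squeezing property} (cone invariance plus exponential decay outside the cone) as its hypothesis, not the spectral gap condition itself. Your contraction argument for $\mathcal{T}$, however, appeals directly to the spectral gap $\lambda_{n+1}-\lambda_n$ (via Proposition~\ref{prop1}) to choose $\eta$ and control the two integrals. In this paper the distinction is harmless, since the squeezing property is \emph{derived} from the gap in the appendix, but as a proof of the theorem as literally stated there is a mild logical mismatch: you are using a stronger (or at least different) hypothesis than the one assumed. A proof that works purely from the squeezing hypotheses typically proceeds more geometrically (e.g., Hadamard graph transform, or the direct cone-based construction in \cite{fst,Robin}), building $\Phi$ as a limit of graphs pushed forward by the flow and using cone invariance to control the Lipschitz constant, rather than via a backward-time integral equation. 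Your outline of the exponential tracking argument, by contrast, does use the squeezing property directly and is in line with \cite{fst}.
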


Since we have shown that the strong squeezing property holds for (\ref{rr}) provided $n$ is large enough, by using Theorem \ref{thm-g}, we obtain the following result for the simplified Bardina model.
\begin{theorem}
The ``prepared" equation (\ref{rr}) of the simplified Bardina model possesses an $n$-dimensional inertial manifold $\mathcal M$ in $H$, i.e., the solution $S(t)v_0$ of (\ref{rr}) approaches the invariant Lipschitz manifold $\mathcal M$ exponentially. Furthermore, the following exponential tracking property holds: for any $v_0\in H$, there exists a time $\tau\geq 0$ and a solution $S(t)\varphi_0$ on the inertial manifold $\mathcal M$ such that
\begin{align*}
|S(t+\tau)v_0-S(t)\varphi_0| \leq C e^{-b_n t},
\end{align*}
where $b_n$ is defined in Proposition \ref{prop1}, and the constant $C$ depends on $|S(\tau)v_0|$ and $|\varphi_0|$.
\end{theorem}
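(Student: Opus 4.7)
The proof is essentially an application of the general abstract Theorem \ref{thm-g} to the ``prepared'' equation (\ref{rr}), with the role of $N(v)$ played by $F(v) - f = \theta_\rho(|v|)(R(v) - f)$ (where $f$ is constant in time so no extra dependence appears). The plan is to verify each hypothesis of Theorem \ref{thm-g} and then simply invoke it; all nontrivial analytic work has been done in the preceding subsections.

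First I would note that the linear part is the Stokes operator $A = -P_\sigma \Delta$, which is self-adjoint, positive, unbounded on $H$, with compact inverse (see item (2) in Section \ref{pre}); this takes care of the structural assumption on $A$. Next, I would recall from the $L^2$-estimate on $v$ at the end of Subsection \ref{sub2} that every solution $v(t)$ of (\ref{r}) (and hence of (\ref{rr})) eventually enters the ball of radius $\rho = \rho_0 + \alpha^2\rho_2$ in $H$; this is precisely the absorbing ball hypothesis required by Theorem \ref{thm-g}. Thirdly, the globally Lipschitz property of the truncated nonlinearity $F$ with Lipschitz constant $\mathscr L = c\rho\lambda_1^{-1}\alpha^{-4}$ was already established in estimate (\ref{sum}).

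Then I would invoke Proposition \ref{prop1}, which supplies the strong squeezing property for (\ref{rr}) whenever $n$ is chosen so that both
\begin{equation*}
\lambda_{n+1} - \lambda_n > \frac{\mathscr L (\gamma+1)^2}{\nu\gamma}
\qquad\text{and}\qquad
\lambda_{n+1} > \nu^{-1}\mathscr L\left(\frac{1}{\gamma}+1\right)
\end{equation*}
are satisfied. By Richards' Theorem \ref{rich}, the eigenvalues of $A$ satisfy the spectral gap property (\ref{gap}), so infinitely many such $n$ exist. Identifying the decay rate provided by Proposition \ref{prop1}(ii), namely $b_n = \nu\lambda_{n+1} - \mathscr L(1/\gamma+1)$, with the abstract rate $a_n$ of Theorem \ref{thm-g}, we obtain the existence of an $n$-dimensional Lipschitz inertial manifold $\mathcal M \subset H$ for (\ref{rr}), together with the exponential tracking estimate
\begin{equation*}
|S(t+\tau)v_0 - S(t)\varphi_0| \leq C e^{-b_n t},
\end{equation*}
where $C$ depends on $|S(\tau)v_0|$ and $|\varphi_0|$.

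Since nothing in the proof is actually subtle, the ``main obstacle'' is essentially bookkeeping: making sure that the Lipschitz constant $\mathscr L$ used in Proposition \ref{prop1} is the same one derived in (\ref{sum}), that the gap condition in (i) of that proposition is implied by (\ref{gap}) through Theorem \ref{rich}, and that the $n$ we finally select satisfies the two separate requirements for cone invariance and for decay \emph{simultaneously}. Once this is observed, the conclusion follows directly from Theorem \ref{thm-g}.
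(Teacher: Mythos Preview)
Your proposal is correct and follows essentially the same approach as the paper: the paper does not give a separate formal proof of this theorem but simply states that, having verified the global Lipschitz estimate (\ref{sum}), the strong squeezing property (Proposition \ref{prop1}), and the spectral gap condition via Theorem \ref{rich}, the result follows directly from the abstract Theorem \ref{thm-g}. Your write-up spells out precisely these verifications and the identification $a_n = b_n$, which is exactly what is intended.
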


\bigskip

\section{MODIFIED-LERAY-$\alpha$ MODEL}  \label{sec4}
This section is devoted to proving the existence of an inertial manifold for the modified-Leray-$\alpha$ model (\ref{bard}).
Applying the Helmholtz-Leray orthogonal projection $P_{\sigma}$ to (\ref{bard}), we obtain the following equivalent functional differential equation:
\begin{align}\label{bardina2}
\begin{cases}
u_t+\nu Au+B(u,\bar{u})=f \\
u=\bar{u}+\alpha^{2}A\bar{u} \\
u(x,0)=u_0(x).
\end{cases}
\end{align}
An analytical study of the modified-Leray-$\alpha$ model has been presented in \cite{Lunasin-Titi-06}. Specifically, it was shown that (\ref{bardina2}) is globally well-posed in 3D. In addition, an upper bound for the dimension of its global attractor and analysis of the energy spectrum were established. The proof of global well-posedness in 2D is very similar, so we just state the result and omit its proof.
\begin{theorem}\label{rs}
{\bf(Regular Solution)}  Let $f \in H$, $u_0 \in V'$, and $T > 0$. Then there exists a unique function $u \in C([0,T];V')\cap L^2([0,T];H)$ with $u_t\in
L^2([0,T];D(A)')$ and $u(0)=u_0$, and which satisfies equation (\ref{bardina2}) in the following sense:
\begin{equation}\label{W-soln}
\ang{\frac{du}{dt},w}_{D(A)'} + \nu\ang{Au,w}_{D(A)'}+
\left(B(u,\bar{u}),w\right) = \langle f,w \rangle_{V'},
\end{equation}
for every $w\in D(A)$. Moreover the solution $v$ depends continually on the initial data with respect to the $L^{\infty}([0,T];V')$ norm. Here, the equation
(\ref{W-soln}) is understood in the following sense:
for almost everywhere $t_0,t\in [0,T]$ we have
\begin{equation*}
\ang{u(t), w}_{V'}-\ang{u(t_0),w}_{V'} +
\nu\int_{t_0}^t(u,Aw)+\int_{t_0}^t
\left(B(u(s),\bar{u}(s)),w\right) ds = \int_{t_0}^t \langle f,w \rangle_{V'}  ds.
\end{equation*}
\end{theorem}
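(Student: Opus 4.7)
The plan is to follow the standard Faedo-Galerkin argument, paralleling the 3D construction in \cite{Lunasin-Titi-06} (the 2D case being easier). The crucial structural observation is that the filter $\bar u=(I+\alpha^2 A)^{-1}u$ gains two derivatives, so the natural energy for a solution with $u\in V'$ is
$$\ang{u,\bar u}_{V'\times V}=|\bar u|^2+\alpha^2\|\bar u\|^2,$$
which is comparable to $|u|_{V'}^2$ up to constants depending only on $\alpha$ and $\lambda_1$. The driving cancellation throughout is $(B(u,\bar u),\bar u)=0$, valid because $u$ is divergence free.

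I would first fix the Galerkin basis of eigenfunctions of $A$ with orthogonal projections $P_m$, solve the finite-dimensional ODE
$$\tfrac{d}{dt}u_m+\nu A u_m+P_m B(u_m,\bar u_m)=P_m f,\qquad u_m(0)=P_m u_0,\quad \bar u_m=(I+\alpha^2 A)^{-1}u_m,$$
and pair with $\bar u_m$ to obtain
$$\tfrac{1}{2}\tfrac{d}{dt}\bigl(|\bar u_m|^2+\alpha^2\|\bar u_m\|^2\bigr)+\nu\bigl(\|\bar u_m\|^2+\alpha^2|A\bar u_m|^2\bigr)=\ang{f,\bar u_m}_{V'}.$$
Cauchy-Schwarz, Young (using $f\in H\subset V'$), and Gronwall then yield uniform-in-$m$ bounds on $\bar u_m\in L^\infty(0,T;V)\cap L^2(0,T;D(A))$, equivalently on $u_m\in L^\infty(0,T;V')\cap L^2(0,T;H)$, so the Galerkin solutions are global. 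To estimate $\partial_t u_m$ in $L^2(0,T;D(A)')$, I would test against $w\in D(A)$ and exploit the duality $b(u_m,\bar u_m,w)=-b(u_m,w,\bar u_m)$ together with the 2D bound $|\phi|_{L^4}\leq c|\phi|^{1/2}\|\phi\|^{1/2}$ to obtain
$$|(B(u_m,\bar u_m),w)|\leq c\,|u_m|\,|\bar u_m|^{1/2}\|\bar u_m\|^{1/2}|Aw|,$$
so that the nonlinearity lies uniformly in $L^2(0,T;D(A)')$. An Aubin-Lions argument using the compact embedding $H\hookrightarrow V'$ then extracts a subsequence converging weak-$*$ in $L^\infty(0,T;V')$, weakly in $L^2(0,T;H)$, and strongly in $L^2(0,T;V')$; strong convergence of $\bar u_m$ in $L^2(0,T;V)$ is enough to pass to the limit in $B(u_m,\bar u_m)$ via the same duality identity.

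For uniqueness and continuous dependence, let $u_1,u_2$ be two solutions, $w=u_1-u_2$, $\bar w=\bar u_1-\bar u_2$, so that
$$w_t+\nu A w+B(w,\bar u_1)+B(u_2,\bar w)=0.$$
Pairing with $\bar w$, using $(B(u_2,\bar w),\bar w)=0$ and the chain rule $\ang{w_t,\bar w}=\tfrac{1}{2}\tfrac{d}{dt}(|\bar w|^2+\alpha^2\|\bar w\|^2)$ (justified at the Galerkin level and passed to the limit), the remaining term $-(B(w,\bar u_1),\bar w)=b(w,\bar w,\bar u_1)$ is bounded via Ladyzhenskaya and the relation $|w|\leq|\bar w|+\alpha^2|A\bar w|$; Young's inequality absorbs the $|A\bar w|^2$ contribution into the dissipation, yielding
$$\tfrac{d}{dt}\bigl(|\bar w|^2+\alpha^2\|\bar w\|^2\bigr)\leq g(t)\bigl(|\bar w|^2+\alpha^2\|\bar w\|^2\bigr),$$
with $g\in L^1(0,T)$ determined by the known regularity of $\bar u_1$. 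Gronwall's inequality then gives uniqueness and continuous dependence in $L^\infty(0,T;V')$; the strong continuity $u\in C([0,T];V')$ and the integral form of the equation follow from the abstract lemma preceding (\ref{lem}). The chief obstacle is the $D(A)'$ bound on the nonlinearity: only by shifting the derivative onto the $D(A)$ test function through $b(u,\bar u,w)=-b(u,w,\bar u)$ can the low regularity of $u$ (merely in $H$) be offset by the two extra derivatives available on $\bar u$; once this is in place, the remaining steps follow the usual template.
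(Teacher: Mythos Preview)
Your proposal is correct and follows precisely the standard Faedo--Galerkin scheme that the paper invokes by reference: the authors do not give a proof of this theorem at all, but simply point to the 3D argument in \cite{Lunasin-Titi-06} and declare the 2D case ``very similar.'' Your outline---energy identity obtained by testing against $\bar u$, the cancellation $(B(u,\bar u),\bar u)=0$, the $D(A)'$ bound on the nonlinearity via the antisymmetry $b(u,\bar u,w)=-b(u,w,\bar u)$, Aubin--Lions compactness, and the Gronwall-based uniqueness---is exactly that template specialized to 2D, so there is nothing to contrast.
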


\smallskip

\subsection{Asymptotic estimates for the long-time dynamics} \label{modsec}
In order to prove the existence of an inertial manifold, it is required to establish
appropriate \emph{a priori} estimates on the long-time dynamics of the solution.
In particular, we are required to find absorbing balls for the
dynamical system induced by the equation (\ref{bardina2}) in
various spaces of functions. The estimates provided here are done
formally, but can be justified rigorously, for instance, by using the standard Galerkin approximation method.
During our estimates, $u_0\in V'$ and $f\in H$.

\subsubsection{$H^1$-estimate on $\bar u$} \label{sub3}
Taking the $D(A)'$ action of the equation (\ref{bardina2}) on $\bar u$ by using the fact $(B(u,\bar u),\bar u)=0$ and (\ref{lem}), we obtain
\begin{align}  \label{L2-1}
\frac{1}{2}\frac{d}{dt}(|\bar{u}|^{2}+\alpha^{2}\|\bar{u}\|^{2})+
\nu(\|\bar{u}\|^{2}+\alpha^{2}|A\bar{u}|^{2})=(f,\bar{u}).
\end{align}

Notice that, the energy identity (\ref{L2-1}) is almost identical to (\ref{H1-1}) from the analysis of the simplified Bardina model. Therefore, we can adopt the estimate in the subsection \ref{sub1} to conclude
\begin{align*}
&\limsup_{t\rightarrow \infty}|\bar u(t)|\leq \frac{1}{2}\rho_0:= \left[(1+\alpha^2 \lambda_1)\alpha^{2}\lambda_{1}\nu^{2} \right]^{-\frac{1}{2}}|A^{-1}f| ;   \\
&\limsup_{t\rightarrow \infty} \|\bar u(t)\| \leq \frac{1}{2}\rho_1:=(\alpha^4 \lambda_1 \nu^2)^{-\frac{1}{2}}|A^{-1}f|.
\end{align*}
From this, we conclude that, the solution $\bar u(t)$,  after a sufficiently large time, enters a ball in $H$ with radius $\rho_0$, and also enters a ball in $V$ with radius $\rho_1$. In addition the growth of the radii $\rho_0$ and $\rho_1$ with respect to the shrinking of the viscosity $\nu$ satisfies $\rho_0 \sim \nu^{-1}$ and $\rho_1 \sim  \nu^{-1}$.

\subsubsection{$L^2$-estimate on $u$ ($H^2$-estimate on $\bar u$)}
By taking the $D(A)'$ action of the equation (\ref{bardina2}) on $u$ and using (\ref{lem}), we have
\begin{align*}
\frac{1}{2} \frac{d}{dt}|u|^2 +  \nu \|u\|^2+(B(u,\bar u),u)=(f,u).
\end{align*}
Recall in subsection \ref{sub2} when we derived $L^2$-estimate on $v$ ($H^2$-estimate on $\bar v$) for the simplified Bardina model, we used the identity $(B(\bar v,\bar v),A\bar v)=0$ (in the periodic 2D case) to eliminate the nonlinearity. On the other hand, for the NSE, the $L^2$-estimate is fairly easy, since $(B(u,u),u)=0$. However, under the current situation, the nonlinear term $(B(u,\bar u),u)$ does not vanish, which causes the estimate to be slightly more involved. Indeed, by using H\"older's inequality, and the Ladyzhenskaya inequality $|u|_{L^4} \leq c|u|^{\frac{1}{2}} \|u\|^{\frac{1}{2}}$, as well as the Young's inequality, we infer
\begin{align*}
|(B(u,\bar u),u)| \leq |u|_{L^4}^2 \|\bar u\|
\leq c |u| \|u\|  \|\bar u\|
\leq \frac{\nu}{4} \|u\|^2 + \frac{c}{\nu} |u|^2 \|\bar u\|^2.
\end{align*}
Also, $|(f,u)|=|(A^{-\frac{1}{2}}f, A^{\frac{1}{2}}u)| \leq |A^{-\frac{1}{2}}f| \|u\|\leq \frac{\nu}{4} \|u\|^2 + \frac{1}{\nu} |A^{-\frac{1}{2}}f|^2$. Combining the above estimates, we obtain
\begin{align*}
\frac{d}{dt}|u|^2 +  \nu \|u\|^2 \leq \frac{c}{\nu}|u|^2 \|\bar u\|^2+ \frac{2}{\nu}|A^{-\frac{1}{2}}f|^2 .
\end{align*}

In subsection \ref{sub3}, we have shown that there exists $t_1>0$ such that $|\bar u(t)|\leq \rho_0$ and $\|\bar u(t)\|\leq \rho_1$ provided $t\geq t_1$. As a result,
\begin{align}   \label{L2-2}
\frac{d}{dt}|u|^2 +  \nu \|u\|^2  \leq \frac{c}{\nu} \rho_1^2 |u|^2+ \frac{2}{\nu}|A^{-\frac{1}{2}}f|^2, \text{\;\;for all\;\;} t\geq t_1.
\end{align}

We attempt to derive a uniform bound for $|u(t)|$. To this end, we integrate between $s$ and $t+\frac{1}{\nu \lambda_1}$ for $t_1\leq t\leq s\leq t+\frac{1}{\nu \lambda_1}$:
\begin{align*}
|u(t+\frac{1}{\nu \lambda_1} )|^2\leq |u(s)|^2 + \frac{c}{\nu} \rho_1^2 \int_t^{t+\frac{1}{\nu \lambda_1}} |u(s)|^2 ds + \frac{2}{\nu^2 \lambda_1}|A^{-\frac{1}{2}}f|^2.
\end{align*}
Then, integrating with respect to $s$ from $t$ to $t+\frac{1}{\nu \lambda_1} $ gives
\begin{align} \label{L2-4}
\frac{1}{\nu \lambda_1}|u(t+\frac{1}{\nu \lambda_1})|^2
\leq \left(\frac{c}{\nu^2 \lambda_1} \rho_1^2+1\right) \int_t^{t+\frac{1}{\nu \lambda_1}} |u(s)|^2 ds + \frac{2}{\nu^3 \lambda_1^2}|A^{-\frac{1}{2}}f|^2, \text{\;\;for all\;\;} t\geq t_1.
\end{align}
In order to control the right-hand side, we should obtain a bound on $\int_t^{t+\frac{1}{\nu \lambda_1}} |u(s)|^2 ds$. To this end, we deduce from (\ref{L2-1}) by using Cauchy-Schwarz and Young's inequalities:
\begin{equation*}
\frac{d}{dt}(|\bar{u}|^{2}+\alpha^{2}||\bar{u}||^{2})+
\nu(||\bar{u}||^{2}+\alpha^{2}|A\bar{u}|^{2})\leq
\frac{|A^{-1}f|^{2}}{\alpha^{2}\nu}.
\end{equation*}
Integrating the above inequality from $t$ to $t+\frac{1}{\nu \lambda_1}$ yields
\begin{align*}
\nu \alpha^2 \int_t^{t+\frac{1}{\nu \lambda_1}} |A\bar{u}(s)|^{2} ds &\leq |\bar{u}(t)|^{2}+\alpha^{2}||\bar{u}(t)||^{2}+\frac{|A^{-1}f|^{2}}{\alpha^{2}\nu^2 \lambda_1} \\
&\leq \rho_0^2+\alpha^2 \rho_1^2+\frac{|A^{-1}f|^{2}}{\alpha^{2}\nu^2 \lambda_1}, \text{\;\;for\;\;} t\geq t_1,
\end{align*}
where we have used the fact that $|\bar u(t)|\leq \rho_0$ and $\|\bar u(t)\|\leq \rho_1$ for $t\geq t_1$.

By definition $u=\bar u+\alpha^2 A \bar u$, it follows that $|u|^2\leq 2(|\bar u|^2+\alpha^4 |A\bar u|^2)\leq 2\left(\frac{1}{\lambda_1^2}+\alpha^4 \right) |A\bar u|^2 $ due to Poincar\'e inequality. Consequently, for $t\geq t_1$, one has
\begin{align} \label{L2-3}
\int_t^{t+\frac{1}{\nu \lambda_1}}|u(s)|^2 ds &\leq 2\left(\frac{1}{\lambda_1^2}+\alpha^4 \right)  \int_t^{t+\frac{1}{\nu \lambda_1}} |A\bar u(s)|^2 ds \notag\\
&\leq C_0:=\left(\frac{1}{\lambda_1^2}+\alpha^4\right )\frac{2}{\nu \alpha^2}\left(\rho_0^2+\alpha^2 \rho_1^2+\frac{|A^{-1}f|^{2}}{\alpha^{2}\nu^2 \lambda_1}\right).
\end{align}
Substituting (\ref{L2-3}) into (\ref{L2-4}), we conclude
\begin{align*}
|u(t+\frac{1}{\nu \lambda_1} ) |^2 \leq \rho_3^2:= \left(\frac{c}{\nu} \rho_1^2+\nu \lambda_1\right) C_0+\frac{2}{\nu^2 \lambda_1}|A^{-\frac{1}{2}} f |^2  ,  \text{\;\;for\;\;} t\geq t_1.
\end{align*}
This indicates that, for $t\geq t_1+\frac{1}{\nu \lambda_1} $, the solution $u(t)$ enters a ball in $H$ with the radius $\rho_3$.

Furthermore, the growth of the radius $\rho_3$ with respect to the shrinking of the viscosity $\nu$ satisfies $\rho_3 \sim \nu^{-3}$.

\smallskip

\subsubsection{$H^1$-estimate on $u$} \label{sec5}
We take the $D(A)'$ action of the equation (\ref{bardina2}) on $Au$. It follows from (\ref{lem}) that
\begin{align*}
\frac{1}{2}\frac{d}{dt}\|u\|^2 +\nu |Au|^2 + (B(u,\bar u), Au)=(f,Au).
\end{align*}
By using Cauchy-Schwarz and Young's inequalities, one has
\begin{align*}
\frac{d}{dt}\|u\|^2 +\nu |Au|^2 \leq \frac{2}{\nu}(|B(u,\bar u)|^2+|f|^2).
\end{align*}
Recall we have shown that $\|\bar u(t)\|\leq \rho_1$ for $t\geq t_1$, as well as $|u(t)|\leq \rho_3$ for $t\geq t_1+\frac{1}{\nu \lambda_1}$. Therefore, by employing (\ref{in}) along with (\ref{change}), we deduce
\begin{align*}
|B(u,\bar u)|  \leq c|u|^{\frac{1}{2}}\|u\|^{\frac{1}{2}}
\|\bar u\|^{\frac{1}{2}} |A\bar u|^{\frac{1}{2}}
\leq \frac{c}{\alpha} |u| \|u\|^{\frac{1}{2}} \|\bar u\|^{\frac{1}{2}} \leq \frac{c}{\alpha} \rho_3 \rho_1^{\frac{1}{2}} \|u\|^{\frac{1}{2}},
\text{\;\;for\;\;} t\geq t_1+\frac{1}{\nu \lambda_1}.
\end{align*}

As a result, for $t\geq t_1+\frac{1}{\nu \lambda_1}$,
\begin{align*}
\frac{d}{dt}\|u\|^2 &\leq \frac{c}{\nu \alpha^2} \rho_3^2 \rho_1 \|u\| +\frac{2}{\nu} |f|^2.
\end{align*}
To obtain a uniform bound for $\|u(t)\|$, we integrate between $s$ and $t+\frac{1}{\nu \lambda_1}$, for $t_1+\frac{1}{\nu \lambda_1}\leq t \leq s \leq t+\frac{1}{\nu \lambda_1}$,
\begin{align*}
\|u(t+\frac{1}{\nu \lambda_1})\|^2 \leq \|u(s)\|^2 + \frac{c}{\nu \alpha^2} \rho_3^2 \rho_1 \int_t^{t+\frac{1}{\nu \lambda_1}}  \|u(s)\| ds   +  \frac{2}{\nu^2 \lambda_1}  |f|^2.
\end{align*}
Then, using Cauchy-Schwarz and integrating with respect to $s$ between $t$ and $t+\frac{1}{\nu \lambda_1}$ yield
\begin{align*}
\frac{1}{\nu \lambda_1}\|u(t+\frac{1}{\nu \lambda_1})\|^2 \leq   \int_t^{t+\frac{1}{\nu \lambda_1}}\|u(s)\|^2 ds
+ \frac{c}{\nu^{\frac{5}{2}} \alpha^2  \lambda_1^{\frac{3}{2}}} \rho_3^2 \rho_1  \left(\int_t^{t+\frac{1}{\nu \lambda_1}}\|u(s)\|^2 ds\right)^{\frac{1}{2}}
+  \frac{2}{\nu^3  \lambda_1^2} |f|^2,
\end{align*}
for $t\geq t_1+\frac{1}{\nu \lambda_1}$.
Now we ought to find a bound for $\int_t^{t+\frac{1}{\nu \lambda_1}}\|u(s)\|^2 ds$. Indeed, integrating (\ref{L2-2}) from $t$ to $t+\frac{1}{\nu \lambda_1}$ for $t\geq t_1+\frac{1}{\nu \lambda_1}$ gives
\begin{align*}
\int_t^{t+\frac{1}{\nu \lambda_1}} \|u(s)\|^2 ds
&\leq \frac{1}{\nu}\left(|u(t)|^2+ \frac{c}{\nu} \rho_1^2 \int_t^{t+\frac{1}{\nu \lambda_1}} |u(s)|^2 ds +\frac{2}{\nu^2 \lambda_1} |A^{-\frac{1}{2}}f|^2\right) \notag\\
&\leq C_1:=\frac{1}{\nu}\left(\rho_3^2 + \frac{c}{\nu}\rho_1^2 C_0 + \frac{2}{\nu^2 \lambda_1} |A^{-\frac{1}{2}}f|^2\right)  ,
\end{align*}
where we have used (\ref{L2-3}) and the fact that $|u(t)|\leq \rho_3$ provided $t\geq t_1+\frac{1}{\nu \lambda_1}$.

Finally, we conclude
\begin{align*}
\|u(t)\|^2 \leq  \tilde \rho^2:= \nu \lambda_1  C_1 +   \frac{c}{\nu^{\frac{3}{2}} \alpha^2  \lambda_1^{\frac{1}{2}}    } \rho_3^2 \rho_1 C_1^{\frac{1}{2}} +\frac{2}{\nu^2 \lambda_1} |f|^2,
\text{\;\;for\;\;} t\geq t_1+\frac{2}{\nu \lambda_1}.
\end{align*}
This shows the solution $u(t)$ enters of a ball in $V$ of radius $\tilde \rho$ for $t\geq t_1+\frac{2}{\nu \lambda_1}$.

Also, recall $\rho_0 \sim \nu^{-1}$, $\rho_1 \sim \nu^{-1}$, $\rho_3 \sim \nu^{-3}$, then by (\ref{L2-3}) one has $C_0 \sim \nu^{-3}$, and thus we see that $C_1 \sim \nu^{-7}$. Hence, $\tilde \rho \sim  \nu^{-6}.$

\bigskip

\subsection{Existence of an inertial manifold} \label{exist}
From energy estimates established in section \ref{modsec}, we see that for positive time $t$, one has $u(t) \in V$ because of the parabolic nature of the equation, and for sufficiently large time $t\geq t_1+\frac{2}{\nu \lambda_1}$, the solution $u(t)$ enters a ball in $V$ of radius $\tilde \rho$. So, without loss of generality, as far as inertial manifold is concerned, which is a long-time behavior, we assume the initial data $u_0 \in V$.

We set $\mathcal R(u):=B(u,\bar u)$. Then the equation (\ref{bardina2}) takes the form
\begin{align}    \label{r'}
u_t+\nu Au+\mathcal R(u)=f.
\end{align}

 Recall that the nonlinear term $B(\bar v,\bar v)=P_{\sigma} (\bar v \cdot \nabla)\bar v$ in the simplified Bardina model (\ref{bar1}) is locally Lipschitz from $H$ to $H$, which is a condition for (\ref{bar1}) possessing an inertial manifold. However, $\mathcal R(u)$ does not have this property, since it is not a mapping from $H$ to $H$. However, we will be able to show that $\mathcal R$ is locally Lipschitz continuous from $V$ to $V$. To see this, we calculate
\begin{align} \label{estR}
\|\mathcal R(u)\|=|A^{\frac{1}{2}} B(u,\bar u)|
&\leq | B(A^{\frac{1}{2}}u,\bar u)|+| B(u, A^{\frac{1}{2}} \bar u)|\notag\\
&\leq \|u\| |\nabla \bar u|_{L^{\infty}}+ c |u|^{\frac{1}{2}}\|u\|^{\frac{1}{2}} |A \bar u|^{\frac{1}{2}} |A^{\frac{3}{2}}  \bar u|^{\frac{1}{2}} \notag\\
&\leq c\|u\| \|\bar u\|^{\frac{1}{2}}  |A^{\frac{3}{2}} \bar u|^{\frac{1}{2}}
+c |u|^{\frac{1}{2}}\|u\|^{\frac{1}{2}} |A \bar u|^{\frac{1}{2}} |A^{\frac{3}{2}}  \bar u|^{\frac{1}{2}} \notag\\
&\leq c \lambda_1^{-\frac{1}{2}} \|u\| |A^{\frac{3}{2}} \bar u|  \notag\\
&\leq c \lambda_1^{-\frac{1}{2}} \alpha^{-2} \|u\|^2.
\end{align}
Note that, throughout the above calculation, we have employed (\ref{in}), and Agmon's inequality in 2D: $|\phi|_{L^{\infty}}\leq c|\phi|^{\frac{1}{2}} |A\phi|^{\frac{1}{2}}$, as well as (\ref{change}),
where $c$ is a positive constant.

This shows that $\mathcal R$ is a mapping from $V$ to $V$. By similar computation, we deduce, for $u_1$, $u_2\in V$:
\begin{align}    \label{lip}
\|\mathcal R(u_1)-\mathcal R(u_2)\| \leq c \lambda_1^{-\frac{1}{2}} \alpha^{-2}    (\|u_1\|+\|u_2\|)\|u_1-u_2\|,
\end{align}
that is, $\mathcal R: V\rightarrow V$ is locally Lipschitz continuous.

Recall in the subsection \ref{sec5}, we have shown that $|u(t)| \leq \tilde \rho$ for sufficiently large time $t\geq t_1+\frac{2}{\nu \lambda_1}$. As in \cite{fsr,fst}, in order to avoid certain technical difficulties for large values of $\|u\|$, resulting from the nonlinearity, we truncate the nonlinear term outside the ball of radius $2\tilde \rho$ in $V$      by a smooth cutoff function $\theta: \mathbb R^+ \rightarrow [0,1]$ with $\theta(s)=1$ for $0\leq s \leq 1$, $\theta(s)=0$ for $s\geq 2$, and $|\theta'(s)|\leq 2$ for $s\geq 0$. Define $\theta_{\tilde \rho}(s)=\theta(s/\tilde \rho)$ for $s\geq 0$. We consider the following ``prepared" equation, which is a modification of (\ref{r'}):
\begin{equation} \label{rr'}
u_t+\nu Au+\theta_{\tilde \rho}(\|u\|)(\mathcal R(u)-f)=0.
\end{equation}

Since $\mathcal R: V\rightarrow V$ is locally Lipschitz, by similar calculation as in subsection \ref{mani}, it can be shown that the truncated nonlinearity $\mathcal F(u):=\theta_{\tilde \rho}(\|u\|)\mathcal R(u)$ is globally Lipschitz continuous with Lipschitz constant $\mathcal L:=c\tilde \rho \lambda_1^{-\frac{1}{2}} \alpha^{-2}$.

Now, for $\gamma>0$ and $N\in \mathbb N$, we define the cone in the product space $V\times V$:
\begin{align*}
\tilde \Gamma_{N,\gamma}:=\left\{ \begin{pmatrix}u_1\\u_2\end{pmatrix} \in V \times V: \|Q_N(u_1-u_2)\| \leq \gamma \|P_N(u_1-u_2)\|   \right \}.
\end{align*}
The following result states the equation (\ref{rr'}) possesses the \emph{strong squeezing property}:

\begin{proposition} \label{prop2}
 Let $u_1$ and $u_2$ be two solutions of (\ref{rr'}).
Then (\ref{rr'}) satisfies the following properties:
\begin{enumerate}
\item The \emph{cone invariance property}: Assume that $N$ is large enough such that the spectral gap condition $\lambda_{N+1}-\lambda_N > \frac{\mathcal L (\gamma+1)^2}{\nu \gamma} $ holds. If $\begin{pmatrix}u_1(t_0)\\u_2(t_0)\end{pmatrix}\in \tilde \Gamma_{N,\gamma} $ for some $t_0\geq 0$,
then $\begin{pmatrix}u_1(t)\\u_2(t)\end{pmatrix}\in \tilde \Gamma_{N,\gamma} $ for all $t\geq t_0$;

\item The \emph{decay property}: Assume that $N$ is sufficiently large such that $\lambda_{N+1}>\nu^{-1}\mathcal L\left(\frac{1}{\gamma}+1\right)$. If $\begin{pmatrix}u_1(t)\\u_2(t)\end{pmatrix} \not\in \tilde \Gamma_{N,\gamma} $ for $0\leq t \leq  T$, then
\begin{align*}
\|Q_N (u_1(t)-u_2(t)) \| \leq \|Q_N(u_1(0))-u_2(0)\|e^{-\beta_N t}, \text{\;\;for\;\;} 0\leq t\leq T,
\end{align*}
where $\beta_N  := \nu \lambda_{N+1}-\mathcal L\left(\frac{1}{\gamma}+1\right)>0$.
\end{enumerate}
\end{proposition}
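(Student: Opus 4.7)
The plan mirrors the argument for Proposition \ref{prop1} (whose proof is deferred to the appendix) in the simplified Bardina case, but carried out in the $V$-inner product rather than in $H$, since the truncated nonlinearity $\mathcal{F}$ is only globally Lipschitz as a map $V \to V$ via (\ref{lip}), and not from $H$ to $H$. Set $w = u_1 - u_2$ and $G := \mathcal{F}(u_1) - \mathcal{F}(u_2)$, so that
\begin{equation*}
w_t + \nu A w + G = 0.
\end{equation*}
Decompose $w = p + q$ with $p = P_N w$, $q = Q_N w$. Because $P_N$ and $Q_N$ are orthogonal projections in both $H$ and $V$ and commute with $A$, applying (\ref{lem}) to each mode separately gives the two identities
\begin{align*}
\tfrac{1}{2}\tfrac{d}{dt}\|p\|^2 &= -\nu |Ap|^2 - ((G,p)), \\
\tfrac{1}{2}\tfrac{d}{dt}\|q\|^2 &= -\nu |Aq|^2 - ((G,q)),
\end{align*}
together with the Poincar\'e-type bounds $|Ap|^2 \leq \lambda_N \|p\|^2$ and $|Aq|^2 \geq \lambda_{N+1}\|q\|^2$ on the two spectral subspaces, and the global Lipschitz estimate $\|G\| \leq \mathcal{L}\|w\|$.

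For the cone invariance I would introduce $\Phi(t) := \|q(t)\|^2 - \gamma^2 \|p(t)\|^2$ and show that $\Phi'(t) < 0$ whenever $\Phi(t) = 0$, which by a standard comparison argument propagates $\Phi \leq 0$ forward in time. On the boundary $\|q\| = \gamma\|p\|$ one has $\|w\| = \sqrt{1+\gamma^2}\,\|p\| \leq (1+\gamma)\|p\|$, so the linear part is bounded by $-\nu(\lambda_{N+1}-\lambda_N)\gamma^2\|p\|^2$, while Cauchy-Schwarz in $V$ combined with the Lipschitz bound yields
\begin{equation*}
-((G,q)) + \gamma^2 ((G,p)) \leq \mathcal{L}\|w\|(\|q\| + \gamma^2\|p\|) \leq \mathcal{L}\gamma (1+\gamma)^2\|p\|^2.
\end{equation*}
The hypothesis $\lambda_{N+1} - \lambda_N > \mathcal{L}(\gamma+1)^2/(\nu\gamma)$ is exactly what makes the sum strictly negative.

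For the decay property, while the pair is outside $\tilde{\Gamma}_{N,\gamma}$ we have $\|p\| < \|q\|/\gamma$, hence $\|w\| \leq \sqrt{1 + 1/\gamma^2}\,\|q\| \leq (1 + 1/\gamma)\|q\|$. Substituting this into the energy identity for $\|q\|^2$ and using $|Aq|^2 \geq \lambda_{N+1}\|q\|^2$ gives
\begin{equation*}
\tfrac{d}{dt}\|q\|^2 \leq -2\left[\nu\lambda_{N+1} - \mathcal{L}(1 + 1/\gamma)\right]\|q\|^2 = -2\beta_N \|q\|^2,
\end{equation*}
and Gronwall yields the stated exponential bound. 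The main subtlety I expect is the regularity required to legitimately apply (\ref{lem}) to each of $p$ and $q$ in the $V$-norm: this is automatic on the finite-dimensional range of $P_N$, and on the high-mode piece $q$ it follows from the parabolic smoothing of (\ref{rr'}) using that the truncated nonlinearity is bounded in $V$ by construction, or alternatively by running the entire argument at the Galerkin level and passing to the limit; both routes are standard.
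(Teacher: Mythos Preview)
Your proof is correct and follows essentially the same route as the paper's appendix: energy identities for the low and high spectral components in the $V$-norm, the bounds $|Ap|^2\le\lambda_N\|p\|^2$, $|Aq|^2\ge\lambda_{N+1}\|q\|^2$, and the global Lipschitz estimate $\|\mathcal F(u_1)-\mathcal F(u_2)\|\le\mathcal L\|w\|$. The only cosmetic difference is that the paper works with $\|p\|$ and $\|q\|$ directly (dividing by $\|p\|$ and handling the degenerate case $\|p(t^*)\|=0$ separately via uniqueness), and then checks the sign of $\tfrac{d}{dt}(\|q\|-\gamma\|p\|)$ on the cone boundary, whereas you work with the squared quantity $\Phi=\|q\|^2-\gamma^2\|p\|^2$. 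Your variant sidesteps the division-by-zero issue and gives the same spectral-gap threshold, so either presentation is fine.
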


\begin{proof}
See the appendix.
\end{proof}

Note that the spectral gap condition is satisfied for sufficiently large $N$, by virtue of Theorem \ref{rich}. Consequently the strong squeezing property holds for the equation (\ref{rr'}). Then, according to Theorem \ref{thm-g} which concerns the existence of an inertial manifold, we have the following result.

\begin{theorem}
The ``prepared" equation (\ref{rr'}) of the modified-Leray-$\alpha$ model possesses an $N$-dimensional inertial manifold $\mf M$ in $V$, i.e., the solution $S(t)u_0$ of (\ref{rr'}) approaches the invariant Lipschitz manifold $\mf M$ exponentially in $V$. Furthermore, the following exponential tracking property holds: for any $u_0\in V$, there exists a time $\tau\geq 0$ and a solution $S(t)\varphi_0$ on the inertial manifold $\mf M$ such that
\begin{align*}
\|S(t+\tau)u_0-S(t)\varphi_0\| \leq C e^{-\beta_N t},
\end{align*}
where $\beta_N$ is defined in Proposition \ref{prop2}, and the constant $C$ depends on $\|S(\tau)u_0\|$ and $\|\varphi_0\|$.
\end{theorem}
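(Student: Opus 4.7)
The plan is to invoke the abstract inertial-manifold theorem, Theorem \ref{thm-g}, but applied with the ambient Hilbert space taken to be $V$ rather than $H$. First I would check that all of its hypotheses carry over verbatim to this setting: the Stokes operator $A$ is linear, self-adjoint, and positive on $V$ when $V$ is equipped with the inner product $((\cdot,\cdot))=(A^{1/2}\cdot,A^{1/2}\cdot)$; its inverse $A^{-1}$ remains compact on $V$ (standard Sobolev embedding); and the eigenfunctions of $A$ form a complete orthogonal basis of $V$ as well as of $H$, so that the spectral projectors $P_N$ and $Q_N$ used in the cone $\tilde \Gamma_{N,\gamma}$ are exactly the orthogonal projectors of $V$ onto the corresponding subspaces.

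Next I would assemble the two ingredients needed to run the general machinery. The absorbing-ball estimate $\|u(t)\|\leq \tilde \rho$ for $t\geq t_1+\frac{2}{\nu\lambda_1}$, proved in subsection \ref{sec5}, shows that the dynamics of (\ref{r'}) is eventually confined to the ball of radius $\tilde\rho$ in $V$, which justifies truncating by $\theta_{\tilde\rho}(\|u\|)$ without altering the long-time behavior. The local Lipschitz estimate (\ref{lip}) together with the usual three-case analysis around $\|u\|=2\tilde\rho$, as already carried out for the Bardina nonlinearity in subsection \ref{mani}, shows that the truncated nonlinearity $\mathcal F(u)=\theta_{\tilde\rho}(\|u\|)\mathcal R(u)$ is globally Lipschitz from $V$ into $V$ with constant $\mathcal L = c\tilde\rho\lambda_1^{-1/2}\alpha^{-2}$.

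With these in hand, I would select $N\in\mathbb N$ large enough so that both
\begin{equation*}
\lambda_{N+1}-\lambda_N > \frac{\mathcal L(\gamma+1)^2}{\nu\gamma}
\quad\text{and}\quad
\lambda_{N+1} > \nu^{-1}\mathcal L\left(\frac{1}{\gamma}+1\right)
\end{equation*}
hold (for instance with $\gamma=1$). Existence of such $N$ is guaranteed by the arithmetic spectral-gap result of Richards, Theorem \ref{rich}, since the eigenvalues are of the form $L^{-2}(k_1^2+k_2^2)$. For this $N$, Proposition \ref{prop2} supplies both the cone-invariance and exponential-decay properties for (\ref{rr'}), which together constitute the strong squeezing property in $V$ with decay rate $\beta_N = \nu\lambda_{N+1}-\mathcal L(1/\gamma+1)$.

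Finally, invoking Theorem \ref{thm-g} (with $H$ replaced by $V$ and $a_N=\beta_N$) produces the $N$-dimensional Lipschitz invariant manifold $\mf M\subset V$ attracting all trajectories of (\ref{rr'}) exponentially, and yields the exponential tracking estimate in the $V$-norm with rate $\beta_N$. The main obstacle is essentially a bookkeeping one, namely verifying that the abstract squeezing/graph-transform argument underlying Theorem \ref{thm-g} functions identically in $V$ as in $H$; since all the relevant objects (the operator $A$, its spectral projectors, and the Lipschitz bound on $\mathcal F$) are controlled in the $V$-inner product, no essentially new analytical difficulty arises beyond what was already handled for the Bardina model in subsection \ref{mani}.
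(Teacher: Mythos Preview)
Your proposal is correct and follows essentially the same route as the paper: assemble the absorbing-ball estimate in $V$, the global Lipschitz bound on $\mathcal F$, and the spectral gap via Richards' theorem, feed them into Proposition~\ref{prop2} to obtain the strong squeezing property, and then invoke Theorem~\ref{thm-g} with $V$ in place of $H$. If anything, you are slightly more careful than the paper in explicitly noting that the abstract framework of Theorem~\ref{thm-g} transfers verbatim to the $V$-setting, a point the paper simply takes for granted.
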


\smallskip

\begin{remark}
Concerning the Leray-$\alpha$ model (\ref{Leray}), the nonlinearity is $(\bar w \cdot \nabla)w$ and clearly there is a loss of derivative. It can be shown that the operator $\tilde R(v):=B(\bar v,v)=P_{\sigma} (\bar v \cdot \nabla)v$ is Lipschitz continuous from $V$ to $H$ in 2D. As far as inertial manifold is concerned, this produces the similar difficulty as what we face for the 2D NSE. Indeed, under such scenario, using the classical theory, the existence of an inertial manifold requires a stronger gap condition: $\lambda_{j+1}^{\frac{1}{2}}-\lambda_{j}^{\frac{1}{2}}$ must be sufficiently big, which only holds for very large viscosity $\nu$ (see, e.g. \cite{Robin}). But our main interest lies in fluid flow with small viscosity, which is the situation when turbulence occurs, so a result valid for only large $\nu$ is of no account.
\end{remark}

\section{APPENDIX}
We present the proof of Propositions \ref{prop1} and \ref{prop2} for the sake of completion. Since the proof of these two propositions are similar, we only show Proposition \ref{prop2}.
\begin{proof}
The method of the proof is standard (see, e.g. \cite{fst}). Assume $u_1$ and $u_2$ are two solutions of (\ref{rr'}). To show the cone invariance property (i), it is sufficient to show
$\begin{pmatrix}u_1(t)\\u_2(t)\end{pmatrix}$ can not pass through the boundary of the cone if the dynamics starts inside the cone. More precisely, we shall show $\frac{d}{dt} (\|Q_N(u_1(t)-u_2(t))\|-\gamma \|P_N(u_1(t)-u_2(t))\|)<0$ whenever $\begin{pmatrix}u_1(t)\\u_2(t)\end{pmatrix}\in \partial \tilde\Gamma_{N,\gamma}$, where $\partial \tilde \Gamma_{N,\gamma}$ stands for the boundary of the cone $\tilde \Gamma_{N,\gamma}$.

Recall $\mathcal F(u)=\theta_{\tilde \rho}(\|u\|)\mathcal R(u)$. Then by the equation (\ref{rr'}),
\begin{align*}
\partial_t(u_1-u_2)+\nu A (u_1-u_2) + \mathcal F(u_1)-\mathcal F(u_2)=0.
\end{align*}
By setting $p=P_N (u_1-u_2)$ and $q=Q_N (u_1-u_2)$, we obtain
\begin{align}
&p_t+\nu A p + P_N(\mathcal F(u_1)-\mathcal F(u_2))=0  \label{low}\\
&q_t+\nu A q + Q_N(\mathcal F(u_1)-\mathcal F(u_2))=0.  \label{high}
\end{align}

We take the scalar product of (\ref{low}) with $A p$,
\begin{align*}
\frac{1}{2}\frac{d}{dt} \|p\|^2 + \nu |A p|^2 +    \left( P_N(\mathcal F(u_1)-\mathcal F(u_2)) ,A p \right)=0.
\end{align*}
Thus by the global Lipschitz continuity of $\mathcal F$, we have
\begin{align} \label{div}
\frac{1}{2}\frac{d}{dt} \|p\|^2  \geq -\nu \lambda_N \|p\|^2 - \|\mathcal F(u_1)-\mathcal F(u_2)\|   \|p\|
\geq -\nu \lambda_N \|p\|^2 - \mathcal L \|u_1-u_2\| \|p\|.
\end{align}
Without loss of generality, we can assume $\|p(t)\|>0$. (Otherwise, if $\|p(t^*)\|=0$ for some $t^*$, then since we consider the boundary of the cone, we can assume $\|q(t^*)\|=\gamma \|p(t^*)\|=0$, and thus $u_1(t^*)=u_2(t^*)$. By the uniqueness of solutions, we obtain $u_1(t)=u_2(t)$ for all $t\geq t^*$, and the cone invariance property follows.)   Now we can divide both sides of (\ref{div}) by $\|p(t)\|$, so
\begin{align} \label{ana'}
\frac{d}{dt}  \|p\| \geq -\nu \lambda_N \|p\| - \mathcal L \|u_1-u_2\|.
\end{align}

Analogously, by taking the scalar product of (\ref{high}) with $Aq$, we can deduce
\begin{align}  \label{ana}
\frac{d}{dt} \|q\|  \leq -\nu \lambda_{N+1} \|q\| + \mathcal L \|u_1-u_2\|.
\end{align}

Multiplying (\ref{ana'}) with $\gamma$ and subtracting the result from (\ref{ana}), we infer, by using the fact $p+q=u_1-u_2$,
\begin{align*}
\frac{d}{dt} (\|q\|-\gamma \|p\|) \leq \nu (\lambda_N \gamma \|p\|-\lambda_{N+1} \|q\| )   + \mathcal L(\gamma+1) (\|p\|+\|q\|).
\end{align*}
So whenever $\|q(t)\|=\gamma \|p(t)\|$, i.e. $\begin{pmatrix}u_1(t)\\u_2(t)\end{pmatrix}\in \partial \tilde \Gamma_{N,\gamma}$, we have
\begin{align*}
\frac{d}{dt} (\|q\|-\gamma \|p\|) \leq \left(\nu (\lambda_N-\lambda_{N+1}) +\mathcal L \frac{(\gamma+1)^2}{\gamma} \right) \|q\|  <0,
\end{align*}
due to our assumption $\lambda_{N+1}-\lambda_N > \frac{\mathcal L (\gamma+1)^2}{\nu \gamma}   $.

To show the decay property (ii), we assume $\begin{pmatrix}u_1(t)\\u_2(t)\end{pmatrix} \not\in \tilde \Gamma_{N,\gamma} $ for $0\leq t\leq T$, then $\|q(t)\|>\gamma\| p(t)\|$ for $0\leq t\leq T$, and we see from (\ref{ana}) that
\begin{align*}
\frac{d}{dt} \|q\|  \leq -\nu \lambda_{N+1} \|q\| + \mathcal L (\|p\|+\|q\|)
\leq -\left[\nu \lambda_{N+1}  -\mathcal L \left( \frac{1}{\gamma}   +1    \right)  \right] \|q\|
=-\beta_N  \|q\|,
\end{align*}
for $0\leq t\leq T$, where $\beta_N:=\nu \lambda_{N+1}  -\mathcal L \left( \frac{1}{\gamma}   +1    \right)$. By Gronwall's inequality, one has
\begin{align*}
\|q(t)\|\leq e^{-\beta_N t} \|q(0)\|, \text{\;\;for\;\;}  0\leq t\leq T.
\end{align*}
\end{proof}

\noindent {\bf Acknowledgment.} This work was supported in part by the NSF grants DMS--1109640 and DMS--1109645.


\begin{thebibliography}{99}
\bibitem{AR75}  R. A. Adams and J. J. F. Fournier, {\em Sobolev Spaces}, Second edition, Pure and Applied Mathematics (Amsterdam) 140, Elsevier/Academic Press, Amsterdam, 2003.

\bibitem{Agmon} S. Agmon, {\em Lectures on Elliptic
Boundary Value Problems,} Van Nostrand, New York, 1965.

\bibitem{Bardina} J. Bardina, J. Ferziger, and W. Reynolds,
{\em Improved subgrid scale models for large eddy simulation,}
American Institute of Aeronautics and Astronautics, Fluid and Plasma Dynamics Conference, 13th, Snowmass, Colo., July 14-16, 1980, 10 p.

\bibitem{BIL} L. C. Berselli, T. Iliescu, and W. J. Layton,
{\em Mathematics of Large Eddy Simulation of Turbulent Flows,}
Springer, Scientific Computation, New York, 2006.

\bibitem{bardina} Y. Cao, E. M. Lunasin, and E. S. Titi,
{\em Global well-posdness of the three-dimensional viscous and
inviscid simplified Bardina turbulence models,} Comm. Math. Sci. 4 (2006), 823--848.

\bibitem{CHTi} C. Cao, D. D. Holm, and E. S. Titi,
{\em On the Clark-$\aa$ model of turbulence:  global regularity
and long-time dynamics,} Journal of Turbulence 6, (2005), 1--11.


\bibitem{NS-alpha-3} S. Chen, C. Foias, D. D. Holm, E. Olson, E. S. Titi, and S. Wynne, {\em Camassa-Holm equations as a closure model for turbulent channel and pipe flow}, Phys. Rev. Lett. 81 (1998), 5338–-5341.

\bibitem{NS-alpha-1}  S. Chen, C. Foias, D. D. Holm, E. Olson, E. S. Titi, and S. Wynne,  {\em The Camassa-Holm equations and turbulence}, Predictability: quantifying uncertainty in models of complex phenomena (Los Alamos, NM, 1998), Phys. D 133 (1999), 49–-65.

\bibitem{NS-alpha-2} S. Chen, C. Foias, D. D. Holm, E. Olson, E. S. Titi, and S. Wynne, {\em A connection between the Camassa-Holm equations and turbulent flows in channels and pipes}, The International Conference on Turbulence (Los Alamos, NM, 1998), Phys. Fluids 11 (1999), 2343–-2353.




\bibitem{Leray-alpha}  A. Cheskidov, D. D. Holm, E. Olson, and E. S. Titi, {\em On a Leray-$\alpha$ model of turbulence}, Proc. R. Soc. Lond. Ser. A Math. Phys. Eng. Sci. 461 (2005), 629--649.

\bibitem{Lasiecka-02} I. Chueshov and I. Lasiecka, {\em Inertial manifolds for von K\'arm\'an plate equations}, Special issue dedicated to the memory of Jacques-Louis Lions, Appl. Math. Optim. 46 (2002), 179--206.

\bibitem{CH00} S. Chen, C. Foias, D. D. Holm, E. Olson, E. S. Titi, and S. Wynne,  {\em Camassa-Holm equations as a closure model for
turbulent channel and pipe flow,} Phys. Rev. Lett. 81 (1998), 5338--5341.

\bibitem{CH98} S. Chen, C. Foias, D.D. Holm, E. Olson, E. S. Titi, and S. Wynne,  {\em The Camassa--Holm equations and turbulence,}
 Phys. D  133 (1999), 49--65.

\bibitem{CH99} S. Chen, C. Foias, D. D. Holm, E. Olson, E. S. Titi, and S. Wynne,  {\em A connection between the Camassa-Holm
equations and turbulent flows in channels and pipes,}  Phys. Fluids 11 (1999), 2343--2353.

\bibitem{CHOE}  A. Cheskidov, D. D. Holm, E. Olson, and E. S. Titi,
{\em On a Leray-$\alpha$ Model of Turbulence,} Proc. R. Soc. Lond. Ser. A Math. Phys. Eng. Sci. 461 (2005), 629--649.

\bibitem{CF88} P. Constantin and C. Foias, {\em Navier-Stokes Equations,}
  The University of Chicago Press, 1988.


\bibitem{CFNT1} P. Constantin, C. Foias, B. Nicolaenko, and R. Temam,
  \textit{Integral manifolds and inertial manifolds for dissipative partial differential equations}, Applied Mathematical Sciences 70, Springer--Verlag, New York, 1989.


\bibitem{CFNT2} P. Constantin, C. Foias, B. Nicolaenko, and R. Temam,
  \textit{Spectral barriers and inertial manifolds for dissipative partial differential equations},  J. Dynam. Differential Equations 1 (1989), 45--73.


\bibitem{FHTM} C. Foias, D. D. Holm, and E. S. Titi,  {\em
The three dimensional viscous Camassa--Holm equations, and their
relation to the Navier-Stokes equations and turbulence theory,} J. Dynam. Differential Equations 14 (2002), 1--35.

\bibitem{FHTP} C. Foias, D. D. Holm, and E. S. Titi,  {\em
The Navier--Stokes--alpha model of fluid turbulence. Advances in
nonlinear mathematics and science,} Phys. D 152/153 (2001), 505--519.


\bibitem{FJKT1} C. Foias, M. Jolly, R. Kravchenko, and E. S. Titi, \textit {A determining form for the 2D Navier-Stokes equations - the Fourier modes case}, Journal of Mathematical Physics 53 (2012), 115623, 30 pp.

\bibitem{FJKT2}  C. Foias, M. Jolly, R. Kravchenko, and E. S. Titi,  \textit{A unified approach to determining forms for the 2D Navier-Stokes equations -- the general interpolants case}, Uspekhi Matematicheskikh Nauk 69(2) (2014) 177--200;   Russian Mathematical Surveys 69(2) (2014) 359--381.


\bibitem{Foias-88} C. Foias, B. Nicolaenko, G. R. Sell, and R. Temam, {\em Inertial manifolds for the Kuramoto-Sivashinsky equation and an estimate of their lowest dimension}, J. Math. Pures Appl. (9) 67 (1988), 197--226.

\bibitem{Foias-85} C. Foias, G. R. Sell, and R. Temam, {\em Variétés inertielles des équations différentielles dissipatives}, [Inertial manifolds for dissipative differential equations] , C. R. Acad. Sci. Paris Sér. I Math. 301 (1985), 139--141.

\bibitem{fsr} C. Foias, G. R. Sell, and R. Temma, {\em Inertial
manifold for nonlinear evolutionary equations,} J. Differential
Equations 73 (1988), 309--353.

\bibitem{fst} C. Foias, G. R. Sell, and E. S. Titi, {\em Exponential
Tracking and Approximation of Inertial Manifolds for Dissipative
Nonlinear Equations,} Journal of Dynamics and Differential
Equation 1 (1989), 199--244.


\bibitem{B} B. J. Geurts, A. Kuczaj, and E. S. Titi,
\textit {Regularization modeling for large-eddy
simulation of homogeneous isotropic decaying
turbulence}, Journal of Physics A 41 (2008),
344008 (29pp).


\bibitem{JST} M. S. Jolly, T. Sadigov, and E. S. Titi, \textit{A determining form for the damped driven nonlinear Schr\"{o}dinger equation- Fourier modes case}, \underline{arXiv:1406.2626}



\bibitem{Lunasin-Titi-06} A. A. Ilyin, E. M. Lunasin, E. S. Titi, {\em  A modified-Leray-$\alpha$ subgrid scale model of turbulence,}  Nonlinearity 19 (2006), 879-897.



\bibitem{KT} V. K. Kalantarov and E. S. Titi,
\textit{Global attractors and  determining modes
for the $3D$ Navier--Stokes--Voight equations},
Chinese Annals of Mathematics, Series B, 30(6) (2009), 697--714.

\bibitem{KLT} V. K. Kalantarov, B. Levant, and E. S. Titi,
\textit{ Gevrey regularity of the global attractor
of the 3D Navier-Stokes-Voight equations}, Journal
of Nonlinear Science 19 (2009), 133--152.





\bibitem{LT1} A. Larios and E. S. Titi,
\textit{On the higher-order global regularity of
the  inviscid Voigt-regularization of
three-dimensional hydrodynamic models}, Discrete
and Continuous Dynamical Systems 14 (2010), 603–-627.

\bibitem{LT2} A. Larios and E. S. Titi,   \textit{Higher-order global regularity of an inviscid Voigt-regularization of the three-dimensional inviscid resistive magnetohydrodynamic equations}, Journal of Mathematical Fluid Mechanics 16 (2014), 59–-76.


\bibitem{Leray-1934} J. Leray, \textit{Sur le mouvement d'un liquide visqueux emplissant l'espace} (French) Acta Math. 63 (1934), 193--248.


\bibitem{LRT}  B. Levant, F. Ramos, and E. S. Titi,
\textit{On the statistical properties of the $3$D incompressible
Navier-Stokes-Voigt model}, Communications in Mathematical Sciences 8 (2010), 277--293.


\bibitem{KLT} E. M. Lunasin, S. Kurien, and E. S. Titi,
\textit {Spectral scaling of $\alpha$-models for two-dimensional
turbulence}, Journal of Physics A 41 (2008), 344014 (10pp).



\bibitem{KLTT}  E. M. Lunasin, S. Kurien, M. Taylor, and E. S. Titi,
\textit{A study of the Navier-Stokes$-\alpha$ model for
two-dimensional turbulence}, Journal of Turbulence 8(1)
(2007), 1--21.



\bibitem{ms} J. Mallet-Paret and G. R. Sell, {\em Inertial manifolds for reaction diffusion equations in higher space dimensions,}  J. Amer. Math. Soc. 1 (1988), 805--866.

\bibitem{mora} X. Mora, {\em Finite-dimensional attracting manifolds in reaction-diffusion equations}, Nonlinear partial differential equations (Durham, N.H., 1982), 353--360, Contemp. Math., 17, Amer. Math. Soc., Providence, R.I., 1983.

\bibitem{Layton-06}  W. Layton and R. Lewandowski,
{\em On a well-posed turbulence model,}
Dicrete and Continuous Dyn. Sys. B 6 (2006), 111--128.

\bibitem{Ri} I. Richards, {\em On the gaps between numbers which are sums of two squares}, Adv. in Math. 46 (1982), 1--2.


\bibitem{RT} F. Ramos and E. S. Titi, \textit{Invariant measures for the $3$D Navier-Stokes-Voigt equations and
their Navier-Stokes limit}, Discrete and Continuous Dynamical Systems 28 (2010), 375--403.


\bibitem{Robin} J. C. Robinson, {\em Infinite-dimensional dynamical systems. An introduction to dissipative parabolic PDEs and the theory of global attractors}, Cambridge Texts in Applied Mathematics, Cambridge University Press, Cambridge, 2001.

\bibitem{Tbook} R. Temam, {\em Infinite-dimensional Dynamical Systems in Mechanics and Physics}, Second edition, Applied Mathematical Sciences 68, Springer-Verlag, New York, 1997.

\bibitem{TT84} R. Temam, {\em Navier-Stokes Equations, Theory and Numerical
Analysis,} 3rd revised edition, North-Holland, 2001.

\bibitem{TT} R. Temam, {\em Navier-Stokes Equations and Nonlinear
Functional Analysis,} Second edition, CBMS-NSF Regional Conference Series in Applied Mathematics 66, SIAM, Philadelphia, PA, 1995.




\end{thebibliography}
\end{document}